\title[Stability of adapted tangent sheaves on KE log Fano pairs]{Stability properties of adapted tangent sheaves on K\"ahler--Einstein log Fano pairs}
\author{Louis Dailly}
\address{Louis Dailly, Institut de Math\'ematiques de Toulouse, Universit\'e Paul Sabatier, 31062 Toulouse Cedex~9, France}
\email{\href{mailto:louis.dailly@utoulouse.fr}{louis.dailly@utoulouse.fr}}
\newcommand{\incl}[1][r]
  {\ar@<-0.2pc>@{^(-}[#1] \ar@<+0.2pc>@{-}[#1]}
\newcommand{\immouv}[1][r]
   {\ar@{}[#1] |*[o][F]{\hbox{%
         \vrule width 1.5mm height 0pt depth 0pt%
         \vrule width 0pt height .75mm depth .75mm%
         }}
     \ar@{^{(}->}[#1]}
\newcommand{\Q}{\mathbb{Q}}
\newcommand{\R}{\mathbb{R}}
\newcommand{\C}{\mathbb{C}}
\newcommand{\PP}{\mathbb{P}}
\newcommand{\reg}{\mathrm{reg}\,}
\newcommand{\orb}{\mathrm{orb}\,}
\newcommand{\End}{\mathrm{End}\,}
\newcommand{\PGL}{\mathrm{PGL}\,}
\newcommand{\id}{\mathrm{id}}
\newcommand{\inv}{\mathrm{inv}}
\newcommand{\ddc}{\mathrm{dd^c}}
\newcommand{\Ric}{\mathrm{Ric}\,}
\newcommand{\tr}{\mathrm{tr}\,}
\theoremstyle{plain}
\newtheorem{theo}{Theorem}[section]
\newtheorem*{theoA}{Theorem A}
\newtheorem*{coroB}{Corollary B}
\newtheorem{lemm}[theo]{Lemma}
\newtheorem{prop}[theo]{Proposition}
\theoremstyle{definition}
\newtheorem{defi}[theo]{Definition}
\newtheorem{rema}[theo]{Remark}
\newtheorem{exem}[theo]{Example}
\newtheorem{nota}[theo]{Notation}
\newtheorem*{ackn}{Acknowledgement}
\newenvironment{itemize*}
    {\begin{itemize}%
      \setlength{\itemsep}{0pt}%
      \setlength{\parskip}{0pt}}%
    {\end{itemize}}
\begin{document}

\begin{abstract}
Let $(X, \Delta)$ be a log Fano pair with standard coefficients endowed with a singular K\"ahler--Einstein metric. We show that the adapted tangent sheaf $\mathcal{T}_{X, \Delta, f}$ and the adapted canonical extension $\mathcal{E}_{X, \Delta, f}$ are polystable with respect to $f^*c_1(X, \Delta)$ for any strictly $\Delta$-adapted morphism $f : Y \to X$.
\end{abstract}

\maketitle

\tableofcontents

\section{Introduction}

Let $(X, \omega)$ be a K\"ahler compact manifold. The Kobayashi--Hitchin correspondence establishes that a holomorphic vector bundle $E$ over $X$ can be endowed with a Hermite--Einstein metric $h$ with respect to $\omega$ if and only if it is polystable with respect to its associated de Rham class $\left[ \omega \right]$. This type of result is an algebro-geometric correspondence, which means that there is an equivalence between differential geometric notions and algebraic ones.
Let us assume that $X$ is a Fano manifold endowed with a K\"ahler--Einstein metric $\omega$, that means:
\begin{itemize}
	\item the first Chern class of $X$, denoted $c_1(X)$ is a K\"ahler class,
	\item the metric $\omega$ satisfies the following equality of $(1,1)$-forms: $\Ric \omega = \omega$.
\end{itemize}

Then, the metric $\omega$ is Hermite--Einstein with respect to $\omega$ itself, so that $T_X$ is polystable. The following result due to Tian expresses that $T_X$ satisfies a stronger notion of polystability, namely the canonical extension $E_X$ is also polystable:

\begin{theo}\textup{\cite[Thm.~0.1]{Tia92}}
	Let $X$ be a Fano manifold endowed with a K\"ahler--Einstein metric $\omega$. Then, the canonical extension $E_X$ is polystable with respect to $c_1(X)$.	
\end{theo}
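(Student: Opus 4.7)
My plan is to exhibit an explicit Hermite--Einstein metric on the canonical extension $E_X$ with respect to the Kähler--Einstein form $\omega$, and then invoke the Kobayashi--Hitchin correspondence to conclude polystability. Recall that $E_X$ is defined by the Atiyah-type short exact sequence
\[ 0 \to \mathcal{O}_X \to E_X \to T_X \to 0 \]
whose extension class in $H^1(X, \Omega_X^1) \cong H^{1,1}(X, \R)$ equals $c_1(X)$. The Kähler--Einstein condition $\Ric \omega = \omega$ tells us that $\omega$ itself (suitably normalized) is a harmonic representative of $c_1(X)$, and this is the representative I want to use to build the metric.

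I would fix a $C^\infty$-splitting $E_X \simeq \mathcal{O}_X \oplus T_X$ in which the Dolbeault operator reads $\bar\partial_{E_X} = \bar\partial_{\mathcal{O}_X \oplus T_X} + \beta$, with $\beta \in \Omega^{0,1}(X, \Omega_X^1)$ the $\bar\partial$-closed form corresponding to $\omega$ under the identification $\Omega^{1,1}(X) \subset \Omega^{0,1}(X, \Omega_X^1)$. Equip $E_X$ with the direct-sum Hermitian metric $h_E := h_{\mathcal{O}_X} \oplus h_\omega$, where $h_\omega$ is the metric induced on $T_X$ by $\omega$. A standard block computation then expresses the Chern curvature $\Theta(E_X, h_E)$ as a $2\times 2$ matrix whose diagonal entries are $-\beta \wedge \beta^*$ on the line summand and $\Theta(T_X, h_\omega) - \beta^* \wedge \beta$ on the $T_X$-summand, while the off-diagonal entries are covariant $(1,0)$-derivatives of $\beta$ and of its adjoint $\beta^*$.

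The crux is to verify that $\sqrt{-1}\Lambda_\omega \Theta(E_X, h_E)$ is a constant scalar multiple of $\id_{E_X}$. On the $T_X$-block, the KE equation directly yields $\Lambda_\omega \Theta(T_X, h_\omega) = \id_{T_X}$ (up to the chosen normalization). The off-diagonal contributions have vanishing $\omega$-contraction because of the Kähler identities applied to the closed form $\omega$: closedness of $\omega$ translates into $\Lambda_\omega \partial \beta = 0$ and symmetrically for $\beta^*$.

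The subtle point, which I expect to be the main technical obstacle, is the scalar matching on the two diagonal blocks: one has to check that the traces $\Lambda_\omega(\beta \wedge \beta^*)$ on $\mathcal{O}_X$ and on $T_X$ combine with the Ricci contribution to produce the same pointwise constant. Unwinding this in a local unitary frame diagonalizing $\omega$ reduces it to a pointwise linear-algebra identity for the contraction of $\omega \wedge \omega$ by $\omega$, which the KE normalization makes come out correctly. Once the Hermite--Einstein equation is established globally, the Donaldson--Uhlenbeck--Yau theorem yields the polystability of $E_X$ with respect to $c_1(X)$.
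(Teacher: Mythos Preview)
Your overall strategy is correct and is precisely Tian's original argument: build an explicit Hermite--Einstein metric on $E_X$ from the K\"ahler--Einstein form and invoke Kobayashi--Hitchin. The paper does not prove this statement---it is quoted as background---but the same curvature computation appears in Section~3.2.3 when the paper treats the orbifold canonical extension, so your approach matches the paper's methodology in the smooth special case.

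Two points deserve sharpening. First, the off-diagonal blocks of $\Theta(E_X,h_E)$ vanish \emph{identically}, not merely after contraction: closedness of $\omega$ gives $\overline{\partial}\circ\beta+\beta\circ D''=0$ and $\partial\circ\beta+\beta\circ D'=0$, hence $d\circ\beta+\beta\circ D=0$ as a $2$-form with values in $\mathrm{Hom}(T_X,\mathcal{O}_X)$. This is how the paper states it.

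Second, and more importantly, the ``scalar matching'' you flag does \emph{not} follow from the K\"ahler--Einstein normalisation alone. With $\beta$ taken to be $\omega$ itself one finds, on the line summand,
\[
-\mathbf{i}\,\beta\wedge\beta^*\wedge\omega^{n-1}=\omega^n,
\]
while on the $T_X$-summand,
\[
\mathbf{i}\,\Theta(T_X,h_\omega)\wedge\omega^{n-1}-\mathbf{i}\,\beta^*\wedge\beta\wedge\omega^{n-1}
=\tfrac{1}{n}\omega^n\otimes\id_{T_X}-\tfrac{1}{n}\omega^n\otimes\id_{T_X}=0,
\]
so the two eigenvalues are $1$ and $0$. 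The remedy (carried out explicitly in the paper) is to rescale the second fundamental form: replacing $\beta$ by $\sqrt{\xi}\,\beta$ for $\xi>0$ does not change the isomorphism class of the extension, and the diagonal blocks become $\xi\,\omega^n$ and $\tfrac{1-\xi}{n}\,\omega^n\otimes\id_{T_X}$. The choice $\xi=\tfrac{1}{n+1}$ (equivalently $\xi=\tfrac{1-\xi}{n}$) makes both equal to $\tfrac{1}{n+1}\,\omega^n$, yielding the Hermite--Einstein condition. You should make this rescaling explicit rather than attributing the match to the Einstein constant.
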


Several generalizations of this result to the singular setting exist: 
\begin{itemize}
	\item Li proved the semistability of the orbifold tangent bundle and the canonical extension when $(X, \Delta)$ is a log smooth log Fano pair \cite{Li21}
	\item Druel, Guenancia and P\u{a}un showed the polystability of the tangent sheaf and the canonical extension when $X$ is a $\Q$-Fano variety \cite{DGP24}.
\end{itemize}

The aim of the present paper is to study the case where $(X, \Delta)$ is a general log Fano pair and $\Delta$ has standard coeffcients. In this context, we have to deal with orbisheaves which take into account the presence of a boundary divisor and ramification phenomena through orbifold structure (see \cite{GT22}). These orbisheaves can be realized as concrete sheaves through \emph{adapted morphisms}. They are morphisms $f :Y \to X$ that are finite, Galois, and they branch over irreducible components $\Delta_i$ with ramification indices prescribed by their coefficients in $\Delta$. In order to deal with concrete sheaves, a general notion of adapted sheaves has been introduced in \cite{Dai25} that generalizes the notion of \emph{sheaf of adapted differentials} $\Omega^{[1]}_{X, \Delta, f}$ and of \emph{adapted tangent sheaf} $\mathcal{T}_{X, \Delta, f}$ introduced by Miyaoka, Campana (see \cite{CKT16}). Hence, the notion of \emph{adapted canonical extension} $\mathcal{E}_{X, \Delta, f}$ is defined.
The definition of singular K\"ahler--Einstein metrics in this setting is given in \cite{BBEGZ19}.

With these notations, we establish the following result that is an extension of Tian's strong semistability result.

\begin{theoA}\label{Main Theorem}
    Let $(X, \Delta)$ be a log Fano pair endowed with a singular K\"ahler--Einstein metric. Then, for any strictly adapted morphism $f \colon Y \to X$, the adapted tangent sheaf $\mathcal{T}_{X, \Delta, f}$ and the adapted canonical extension $\mathcal{E}_{X, \Delta, f}$ are polystable with respect to $f^*c_1(X, \Delta)$.
\end{theoA}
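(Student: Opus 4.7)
My plan is to produce a Hermite--Einstein metric on the restriction of each adapted sheaf to a big open subset $Y^\circ \subset Y$ by pulling back and repackaging the singular Kähler--Einstein metric on $(X, \Delta)$, and then to invoke a Kobayashi--Hitchin correspondence for reflexive sheaves on normal projective varieties to promote this to polystability with respect to $f^* c_1(X, \Delta)$. The overall strategy mirrors \cite{DGP24} in the $\Q$-Fano case and \cite{Li21} in the log smooth case; the novelty is that the adapted framework of \cite{Dai25} provides the correct sheaf-theoretic package to glue the analytic picture near the branch divisor to the algebraic polystability notion on $Y$.

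The first step is a local computation around a point of $Y$ lying over $X_{\reg} \setminus \Delta_{\sing}$. Strict $\Delta$-adaptedness means that $f$ is locally of the form $(z_1, \dots, z_n) \mapsto (z_1^{m_1}, \dots, z_k^{m_k}, z_{k+1}, \dots, z_n)$, where $m_i$ is dictated by the standard coefficient $1 - 1/m_i$ of $\Delta_i$. In these coordinates, $f^* \omega_{KE}$ is uniformly equivalent to a smooth Kähler metric on $Y$, and its dual $h$ is a smooth Hermitian metric on $\mathcal{T}_{X, \Delta, f}$ restricted to this chart --- this is exactly what the definition of the adapted tangent sheaf is designed to produce. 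Letting $Y^\circ \subset Y$ be the open subset where $Y$ is smooth and $f$ lies over $X_{\reg} \setminus \Delta_{\sing}$, I obtain a smooth Hermitian metric $h$ on $\mathcal{T}_{X, \Delta, f}|_{Y^\circ}$, and the complement of $Y^\circ$ has codimension at least two, allowing the reflexive analysis to be pushed to all of $Y$.

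Next I would verify the Hermite--Einstein property. The log Kähler--Einstein equation on $(X, \Delta)$ reads $\Ric(\omega_{KE}) = \omega_{KE} + [\Delta]$ as currents; pulling back through the strictly adapted $f$, the boundary current $[\Delta]$ is exactly absorbed by the ramification divisor of $f$, yielding the smooth identity $\Ric(f^* \omega_{KE}) = f^* \omega_{KE}$ on $Y^\circ$. This translates immediately into $h$ being Hermite--Einstein on $\mathcal{T}_{X, \Delta, f}|_{Y^\circ}$. For the adapted canonical extension, whose extension class is represented by $f^* c_1(X, \Delta) = [f^* \omega_{KE}]$, I would build a Hermitian metric on $\mathcal{E}_{X, \Delta, f}|_{Y^\circ}$ whose second fundamental form is precisely $f^* \omega_{KE}$; the Einstein identity causes the cross curvature term to vanish after trace, so a direct computation gives the Hermite--Einstein property. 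A singular Kobayashi--Hitchin theorem à la Bando--Siu, in the form used in \cite{DGP24}, then converts the existence of such a metric on $Y^\circ$ into polystability of the ambient reflexive sheaf with respect to $f^* c_1(X, \Delta)$.

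The hard part will be the analytic control of $h$ and of its extension over $\mathcal{E}_{X, \Delta, f}$ across the ramification divisor and over the preimage of $X_{\sing}$. The ingredients I expect to need are: a version of the Bando--Siu theorem that accepts adapted sheaves (rather than honest vector bundles) and the conic behaviour of the base metric along the branch locus; the $L^2$-integrability of the curvature of $h$, which should follow from the regularity theory for singular Kähler--Einstein metrics developed in \cite{BBEGZ19} and subsequent works; and a careful local check that the metric extends across the codimension-two strata avoided by $Y^\circ$. Most of the technical weight of the proof will, I expect, go into pinning down this adapted Kobayashi--Hitchin correspondence precisely in the present setting and verifying the required integrability and regularity of $h$ along the boundary.
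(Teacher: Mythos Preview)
Your plan has a genuine gap: it does not account for the \emph{additional ramification} of $f$. A strictly $\Delta$-adapted morphism is only required to branch to order $m_i$ over each $\Delta_i$; nothing forbids it from ramifying over further divisors of $X$, and the Kawamata-type covers one actually constructs always do. Along any such extra ramification component $D_j \subset Y$ the form $f^*\omega_{KE}$ is degenerate (it is the pullback of a smooth K\"ahler form through a branched map), so it does not give a Hermitian metric on $\mathcal{T}_{X,\Delta,f}$ there, and the Ricci identity you write down is off by a current: one gets $\Ric(f^*\omega_{KE}) = f^*\omega_{KE} - \sum_j (N_j-1)[D_j]$ rather than a clean Einstein equation. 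Since $|D|$ has codimension one you cannot discard it and appeal to reflexivity, and Bando--Siu type statements need a genuine K\"ahler background, not a big semipositive class. So this is not a technicality to be absorbed into the Kobayashi--Hitchin machinery at the end; it is precisely where the direct approach breaks.

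The paper's route is designed around this obstacle and is quite different from what you propose. It never treats $f^*\omega_{KE}$ as a Hermite--Einstein metric. Instead it passes to a log resolution $\pi\colon \widehat{X}\to X$, approximates $\pi^*\omega_{KE}$ by genuine orbifold K\"ahler metrics $\omega_{t,\varepsilon}$ solving perturbed Monge--Amp\`ere equations, and checks the slope inequality for each subsheaf term by term in the limit $t,\varepsilon\to 0$ (five terms for $\mathcal{T}$, plus a deformation argument $\mathcal{V}_t$ for the extension). On the adapted cover $\widehat{f}\colon \widehat{Y}\to\widehat{X}$ the extra ramification is handled by cutoff functions $\widehat{f}^*\chi_\eta$ supported away from $|D|$ together with uniform mass estimates for $\omega_{t,\varepsilon}^{n-1}$ near the divisor; polystability is then obtained not from Bando--Siu but from a Fatou argument on the second fundamental forms $\beta_{t,\varepsilon}$, forcing $\beta_\infty=0$ and hence a holomorphic splitting on a big open set, which is pushed down to $Y$. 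To make your direct approach work you would need a Kobayashi--Hitchin correspondence for reflexive sheaves over a semipositive big class with exactly this degeneration profile (extra ramification plus the preimage of $X_{\sing}$), which is not available in the references you cite and is harder than the approximation argument.
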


Combined with the main result from \cite{Dai25}, we get the following uniformization result of log Fano pairs satisfying the equality case in the Miyaoka--Yau equality.

\begin{coroB}
	Let $(X, \Delta)$ be a log Fano pair of dimension $n$. Assume that there exists a singular K\"ahler--Einstein metric $\omega$ on $(X, \Delta)$ and the following equality of Chern classes holds:
	$$
	\left( 2(n+1)c_2(X, \Delta) - nc_1(X, \Delta)^2 \right) \cdot c_1(X, \Delta)^{n-2} = 0.
	$$
	Then, there exists a finite group $G \subset \PGL(n+1, \C)$ such that $(X, \Delta) \simeq (\PP^n / G, \Delta_G)$.
\end{coroB}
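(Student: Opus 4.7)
The strategy is to feed the conclusion of Theorem A into the characterization of the Miyaoka--Yau equality case established in \cite{Dai25}. The first step is to fix a strictly $\Delta$-adapted morphism $f\colon Y \to X$; this is available in our setting because $\Delta$ has standard coefficients, so adapted covers exist at least locally and can be globalized by a Galois closure argument (as already used in \cite{CKT16,Dai25}). Having fixed such an $f$, Theorem A provides that the adapted tangent sheaf $\mathcal{T}_{X,\Delta,f}$ and the adapted canonical extension $\mathcal{E}_{X,\Delta,f}$ are both polystable with respect to the polarization $f^*c_1(X,\Delta)$.

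The second step is to reinterpret the Chern class hypothesis in the adapted setting. The adapted Chern classes $c_i(X,\Delta)$ are, by construction, compatible with pullback along $f$ in the sense that $f^*c_i(X,\Delta)$ computes the Chern classes of the adapted sheaf $\mathcal{T}_{X,\Delta,f}$ (up to the degree $\deg f$). Therefore the vanishing
\[
\bigl( 2(n+1)c_2(X,\Delta) - n\, c_1(X,\Delta)^2 \bigr)\cdot c_1(X,\Delta)^{n-2}=0
\]
translates into the analogous vanishing of the Bogomolov--Gieseker discriminant of $\mathcal{T}_{X,\Delta,f}$ against $f^*c_1(X,\Delta)^{n-2}$ on $Y$. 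Combined with polystability of $\mathcal{T}_{X,\Delta,f}$, this is exactly the equality case of the Miyaoka--Yau inequality in the adapted framework, which is the hypothesis treated in the main result of \cite{Dai25}.

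The third step is to invoke \cite{Dai25} to conclude that, in the equality case, the adapted tangent sheaf is projectively flat (i.e.\ locally the adapted projectivization is the quotient of a flat $\PP^{n-1}$-bundle on $Y_{\mathrm{reg}}$), and that the resulting monodromy, together with the positivity of $c_1(X,\Delta)$, forces $(X,\Delta)$ to be a finite quotient of projective space. Concretely, projective flatness of $\mathcal{T}_{X,\Delta,f}$ together with the Fano condition identifies the universal cover, in the orbifold sense, with $\PP^n$ equipped with the Fubini--Study metric; the deck transformation group is then a finite subgroup $G\subset\PGL(n+1,\C)$, and the pair $(X,\Delta)$ is recovered as the quotient $(\PP^n/G, \Delta_G)$, where $\Delta_G$ is the branch divisor with the standard coefficients dictated by the stabilizer orders.

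The main obstacle, and the only nontrivial new input here beyond assembling existing pieces, is the verification that the Chern class assumption really matches the normalization used in \cite{Dai25} (numerical conventions, normalization of $f^*$, and the passage between adapted and orbifold Chern classes); everything else is a clean application of Theorem A combined with the uniformization statement of \cite{Dai25}.
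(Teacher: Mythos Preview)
Your overall strategy is exactly the one the paper indicates: Corollary~B is stated there as a direct consequence of Theorem~A combined with the main result of \cite{Dai25}, and no further argument is given. So at the level of approach you are on target.

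There is, however, a substantive slip in your write-up that you should fix. The quantity $\bigl(2(n+1)c_2(X,\Delta) - n\,c_1(X,\Delta)^2\bigr)\cdot c_1(X,\Delta)^{n-2}$ is \emph{not} the Bogomolov--Gieseker discriminant of the rank-$n$ adapted tangent sheaf $\mathcal{T}_{X,\Delta,f}$ (that would involve $2n\,c_2 - (n-1)c_1^2$); it is the discriminant of the rank-$(n+1)$ adapted canonical extension $\mathcal{E}_{X,\Delta,f}$, whose Chern classes agree with those of $\mathcal{T}_{X,\Delta,f}$ because the extension is by the trivial line bundle. Correspondingly, the input from Theorem~A that feeds into \cite{Dai25} is the polystability of $\mathcal{E}_{X,\Delta,f}$, and what becomes projectively flat in the equality case is $\mathcal{E}_{X,\Delta,f}$, not $\mathcal{T}_{X,\Delta,f}$. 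This is precisely why the paper (and Tian's original theorem, and \cite{DGP24}) insists on establishing polystability of the canonical extension rather than only of the tangent sheaf; your paragraph three should be rewritten with $\mathcal{E}$ in place of $\mathcal{T}$.
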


\emph{Strategy of the proof}\\
	The proof is based on the strategy developed in \cite{DGP24}, and was already used in \cite{Eno88, Gue16, GT22}. The problem of stability properties can be reduced to a log resolution of the pair $(X, \Delta)$. By considering the strict transform $\widehat{\Delta}$ of $\Delta$, the pair $(\widehat{X}, \widehat{\Delta})$ is log smooth, it can be endowed with a smooth orbi-\'etale orbistructure in the sense of \cite{GT22}. Now, the pullback of the K\"ahler--Einstein metric on $\widehat{X}$ can be approximated by orbifold metrics that are solutions of perturbed Monge--Ampère equations, and semistability properties of orbifold bundles on the log resolution can be derived from this approximation process. The strategy developed by Druel, Guenancia, P\u{a}un can be adapted in this orbifold setting.
	
	In order to get semistability of the adapted sheaves, we construct an adapted cover over the resolution $(\widehat{X}, \widehat{\Delta})$ such that adapted sheaves are semistable in a similar way as \cite{GT22}. To conclude, we show that the existence of this adapted cover is sufficient to deduce the semistability properties of adapted sheaves on adapted covers over $X$. The polystability of the adapted sheaves is obtained similarly to \cite{DGP24}. 
	
\begin{ackn} 
    The author warmly thanks Benoît Claudon and Henri Guenancia for their support through discussions, suggestions during his research and corrections of the present paper.
\end{ackn}

\section{Setup}

\subsection{Singular K\"ahler--Einstein metric and resolutions}

\begin{defi}
    \emph{A log Fano pair} $(X, \Delta)$ is the datum of a projective normal variety $X$, and a $\Q$-Weil divisor $\Delta$ with standard coefficients, such that $(X, \Delta)$ is klt and the divisor $-(K_X + \Delta)$ is $\Q$-ample.
\end{defi}

There exists a positive integer $m$ such that $m(K_X + \Delta)$ is Cartier.
Hence, there exists a smooth metric $h^m$ on $m(K_X + \Delta)$ such that the smooth form:
$$
\omega := \frac{1}{m} \textbf{i}\Theta(-m(K_X + \Delta), {h^m}^\vee) \in c_1(X, \Delta)
$$
is K\"ahler. On the other hand, for each local trivialization $\tau_\alpha$ of $m(K_X + \Delta)$, we can define the local $(n,n)$-form $\displaystyle \left(\textbf{i}^{mn^2}\frac{\tau_\alpha \wedge \overline{\tau_\alpha}}{|\tau_\alpha|_{h^m}^2} \right)^{\frac{1}{m}}$. These local forms glue together to define a global form on $X^* := X_\reg \backslash |\Delta|$, and we define the adapted volume $\mu_{h^m}$ as the measure induced by the integration against this $(n,n)$-form on $X^*$.

\begin{defi}
    A \emph{K\"ahler--Einstein metric} is the datum of an $\omega$-psh function $\Phi \in L^\infty(X) \cap \mathrm{PSH}(X, \omega)$ on $X$ satisfying the following Monge--Ampère equation:
    \begin{equation}\label{KE}
    \left( \omega + \ddc \Phi \right)^n = e^{-\Phi} \mu_{h^m}.
    \end{equation}
\end{defi}

\begin{nota}
    Let us consider a log-resolution $\pi : \widehat{X} \longrightarrow X$ for the pair $(X, \Delta)$. We have:
    $$
    K_{\widehat{X}} + \widehat{\Delta} = \pi^*(K_X + \Delta) + \sum_{1 \leq i \leq k} a_i E_i,
    $$
    where $\widehat{\Delta} := \pi^{-1}_* \Delta$ is the strict transform of $\Delta$, $E_i$ are the exceptional divisors and $a_i > -1$. In the following, we will consider $E = \sum_i E_i$ the reduced exceptional divisor. 
\end{nota}

The pair $(\widehat{X}, \widehat{\Delta})$ is log smooth, hence it is naturally endowed with an orbifold structure. In order to deal with orbifold objects, let us recall some related definitions (see \cite[Sect.~2]{Dai25} for details).

\begin{defi}\label{Def orbistructure orbi-etale}
    Let $(X, \Delta)$ be a pair. An \emph{orbi-\'etale orbistructure} on $(X, \Delta)$ is a collection $\mathcal{C}$ of triples $(U_\alpha, V_\alpha, \eta_\alpha)$ called local uniformizing charts (l.u.c.) such that:
    \begin{itemize}
        \item $(U_\alpha)_\alpha$ is an open cover of $X$,
        \item $V_\alpha$ is a normal complex space,
        \item $\eta_\alpha : V _\alpha \to U_\alpha$ is a surjective finite Galois morphism, such that:
        \begin{itemize}
            \item $\eta_\alpha^*\Delta_i = m_i \Delta'_i$, where $\Delta'_i$ is reduced, and $\eta_\alpha$ is \'etale above $U_{\alpha, \reg} \backslash |\Delta_{|U_\alpha}|$,
            \item Let us define the \emph{intersection of $V_\alpha$ with $V_\beta$}, denoted $V_{\alpha\beta}$, as the normalization of $V_\alpha \underset{X}{\times} V_\beta$. Then, the natural morphisms $g_{\alpha\beta} : V_{\alpha\beta} \to V_\alpha$ are quasi-\'etale.
        \end{itemize}
    \end{itemize}
    
    We say that $\mathcal{C}$ is a \emph{smooth} orbi-\'etale orbistructure if each $V_\alpha$ is smooth.
\end{defi}

If the pair can be endowed with a smooth orbi-\'etale orbistructure $\mathcal{C}$, then it is a genuine orbifold in the sense of Satake: the pair is locally given by finite quotients of smooth manifolds. In our case, the log smooth pair $(\widehat{X}, \widehat{\Delta})$ is naturally endowed with a smooth orbi-\'etale orbistructure.

\begin{defi}
    Let $(X, \Delta)$ be a pair endowed with a smooth orbi-\'etale orbistructure $\mathcal{C} = \lbrace (U_\alpha, V_\alpha, \eta_\alpha) \rbrace$. We use notation of Definition~\ref{Def orbistructure orbi-etale}. An \emph{orbisheaf} $\mathcal{F}$ is a collection $(\mathcal{F}_\alpha)_\alpha$ of sheaves such that:
    \begin{itemize}
        \item for each $\alpha$, the sheaf $\mathcal{F}_\alpha$ is a coherent $\mathcal{O}_{V_\alpha}$-module,
        \item for each $\alpha, \beta$, there is an isomorphism of $\mathcal{O}_{V_{\alpha\beta}}$-modules $\varphi_{\alpha\beta} : g_{\beta\alpha}^*\mathcal{F}_\beta \to g_{\alpha\beta}^*\mathcal{F}_\alpha$, called \emph{transition function}. They satisfy compatibility conditions on triple intersections. 
    \end{itemize}
We say that $\mathcal{F}$ is an \emph{orbibundle of rank r} (resp. \emph{torsion free}, \emph{reflexive}) if each $\mathcal{F}_\alpha$ is locally free of rank $r$ (resp. torsion free, reflexive). 
\end{defi}

\begin{exem}\textup{\cite[Ex.~2.18 and 2.21]{Dai25}}\label{exemple orbibundles}
Let $V$ be a vector bundle over $\widehat{X}$. One can define:
    \begin{itemize}
        \item the \emph{trivial orbibundle} $\mathcal{O}_{\widehat{X}}$, whose transition functions are denoted $\varphi_{\alpha\beta}$,
        \item the \emph{orbifold tangent bundle} $\mathcal{T}_{(\widehat{X}, \widehat{\Delta})}$, whose transition functions are denoted $\psi_{\alpha\beta}$,
        \item the \emph{orbifold canonical extension} $\mathcal{E}_{(\widehat{X}, \widehat{\Delta})}$.
        \item the \emph{orbifold bundle associated to $V$}, denoted $V^\orb$.
    \end{itemize}
\end{exem}

We consider the line bundles $\mathcal{O}(E_i)$ and $\mathcal{O}(\widehat{\Delta}_j)$, that we endow with metrics $h_i$ and $h'_j$ respectively. There exist numbers $\varepsilon_i > 0$ such that the class:
$$
\pi^*\left[ -(K_X + \Delta) \right] - \sum_i \varepsilon_i  \left[ \textbf{i}\Theta(E_i, h_i) \right]
$$
can be represented by a K\"ahler form $\widehat{\omega}$. Let $\sigma_i$ be a section of $\mathcal{O}(E_i)$ such that $\mathrm{div}\, \sigma_i = E_i$ and $s_j$ a section of $\mathcal{O}(\widehat{\Delta_j})$ such that $\mathrm{div}\, s_j = \widehat{\Delta_j}$. Finally, we fix a volume form $\mathrm{d}V$ on $\widehat{X}$ defined by:
$$
\Ric \mathrm{d}V := \pi^*\omega - \sum_i a_i \textbf{i}\Theta(E_i, h_i) + \sum_j \left( 1 - \frac{1}{m_i} \right) \textbf{i}\Theta(\widehat{\Delta_j}, h'_j). 
$$

One can observe that the $(n,n)$-form:
$ \frac{\mathrm{d}V}{\prod_j |s_j|_{h'_j}^{2 \left( 1 - \frac{1}{m_j} \right)}}$
is orbifold by studying its form on local uniformizing charts. Moreover, a form on $\widehat{X}$ yields naturally an orbifold form on $(\widehat{X}, \widehat{\Delta})$. Hence, we have the following equality of orbifold forms:
$$
\Ric \left( \frac{\mathrm{d}V}{\prod_j |s_j|_{h'_j}^{2 \left( 1- \frac{1}{m_j} \right)}} \right) = \pi^*\omega - \sum_i a_i \textbf{i}\Theta(E_i, h_i).
$$

\subsection{Regularizations of a Kähler--Einstein metric on a resolution}

Assume that $(X, \Delta)$ is endowed with a K\"ahler--Einstein metric $\omega_\Phi := \omega + \ddc \Phi$ where $\Phi$ solves the equation \eqref{KE}. By pulling back this equation to $\widehat{X}$, we get that $\varphi := \pi^*\Phi$ is a locally bounded $\pi^*\omega$-psh function solving the following Monge-Ampère equation satisfied on $(\widehat{X}, \widehat{\Delta})$:
\begin{equation}
    \left(\pi^* \omega + \ddc \varphi \right)^n = e^{-\varphi} \frac{\prod_i |\sigma_i|_{h_i}^{2a_i}}{\prod_j |s_j|_{h'_j}^{2\left(1 - \frac{1}{m_j} \right)}} \mathrm{d}V.
\end{equation}

Thanks to orbifold version of Demailly's regularization results \cite{Wu23}, one can consider a family of functions $\psi_\varepsilon \in \mathcal{C}^\infty_\orb(\widehat{X}, \R)$ such that:
\begin{itemize}
    \item $\psi_\varepsilon \underset{\varepsilon \longrightarrow 0}{\longrightarrow} \varphi$ in $L^1(\widehat{X})$ and in $\mathcal{C}^\infty_{\orb, \mathrm{loc}}(\widehat{X} \backslash |E|)$,
    \item There exists $C > 0$ such that $\Vert \psi_\varepsilon \Vert_{L^\infty(\widehat{X})} \leq C$,
    \item There exists a continuous function $\kappa : [0, 1] \longrightarrow \R_+$ with $\kappa(0) = 0$ such that $\pi^* \omega + \ddc \psi_\varepsilon \geq - \kappa(\varepsilon) \omega_{\orb}$, where $\omega_{\orb}$ is an orbifold metric\footnote{That is a metric which can be expressed around a general point of an irreducible component $\widehat{\Delta}_i$ of $\widehat{\Delta}$ in adapted coordinates:
    $\displaystyle
    \omega_\orb = \frac{\textbf{i} \mathrm{d} z_1 \wedge \mathrm{d}\overline{z_1}}{|z_1|^{2(1 - \frac{1}{m_i})}} + \sum_{2 \leq j \leq n} \textbf{i} \mathrm{d} z_j \wedge \mathrm{d}\overline{z_j}.
    $} on $(\widehat{X}, \widehat{\Delta})$.
\end{itemize}

Now, let us denote $\omega_{t, \varepsilon} := \pi^*\omega + t\widehat{\omega} + \ddc \varphi_{t, \varepsilon}$, where $\varphi_{t,\varepsilon}$ is the potential solving the perturbed Monge--Ampère equation:

$$
\left(\pi^* \omega + t \widehat{\omega} + \ddc \varphi_{t,\varepsilon} \right)^n = e^{-\psi_\varepsilon} \frac{\prod_i \left(|\sigma_i|_{h_i}^{2} + \varepsilon^2 \right)^{a_i}}{\prod_j |s_j|_{h'_j}^{2\left(1 - \frac{1}{m_j} \right)}} N_{t, \varepsilon}\mathrm{d}V,
$$
where $N_{t, \varepsilon}$ is a normalization constant such that:
$$
\left[ \pi^* \omega + t \widehat{\omega} \right]^n = N_{t, \varepsilon} \int_{\widehat{X}} e^{-\psi_\varepsilon} \frac{\prod_i \left(|\sigma_i|_{h_i}^{2} + \varepsilon^2 \right)^{a_i}}{\prod_j |s_j|_{h'_j}^{2\left(1 - \frac{1}{m_j} \right)}} \mathrm{d}V.
$$
The existence of $\varphi_{t, \varepsilon}$ is guaranteed by orbifold version of Yau's theorem \cite{Fau19}. From this result, we deduce moreover that there exists a uniform constant $C$ such that:
$$
\forall t, \varepsilon \in \left[ 0,1 \right], \Vert \varphi_{t, \varepsilon} \Vert_{L^\infty(\widehat{X})} \leq C.
$$

\begin{rema}\label{convergence local smooth orbifold of potentials}
This result can be improved in the following way: the potentials $\varphi_{t, \varepsilon}$ converge to $\varphi$ when $t, \varepsilon$ go to zero thanks to the stability property of the Monge--Ampère operator (see \cite[Thm.~C]{GZ12}). We have a convergence in $\mathcal{C}^\infty_{\orb, \mathrm{loc}}(\widehat{X} \backslash |E|)$. Indeed, the metrics $\omega_{t, \varepsilon}$ have conic singularities along $\widehat{\Delta}$ on $\widehat{X} \backslash |E|$. If we fix $K $ a compact subset of $\widehat{X} \backslash |E|$, then there exists a constant $C_K$ independent of $t, \varepsilon$ such that:
$$
C_K^{-1} \omega_{\orb} \leq \omega_{t, \varepsilon} \leq C_K \omega_{\orb},
$$
where $\omega_{\orb}$ is an orbifold metric on $(\widehat{X}, \widehat{\Delta})$. 
Such bounds are due to \cite[Thm.~2.(ii)]{GP16}. They are independent of $t, \varepsilon$. Indeed, we have from \cite[Prop.4.1]{Gue13} an explicit form of the estimate that blows up near the exceptional divisor. Then, one can show that Evans-Krylov and Schauder estimates can be used on local uniformizations in order to prove that the family $(\varphi_{t, \varepsilon})$ is uniformly bounded in $\mathcal{C}^p_\orb(K)$ for any $p$.\\
\end{rema}

In order to keep notation light, we will denote in the following $|\sigma_i| := |\sigma_i|_{h_i}$ and $|s_j| := |s_j|_{h'_j}$. We set $h_{i, \varepsilon} := \left( |\sigma_i|^2 + \varepsilon^2 \right)^{-1} h_i$, and we denote: 
$$
\theta_{i,\varepsilon} := \textbf{i}\Theta(E_i, h_{i, \varepsilon}) = \ddc \log \left( |\sigma_i|^2 + \varepsilon^2 \right) + \textbf{i}\Theta(E_i, h_{i}). 
$$
One can compute the Ricci curvature of the perturbed orbifold metric $\omega_{t, \varepsilon}$:
\begin{equation}
    \Ric \omega_{t, \varepsilon} = \ddc \psi_\varepsilon - \sum_i a_i \ddc \log \left( |\sigma_i| ^2 + \varepsilon^2 \right) + \Ric \left( \frac{\mathrm{d}V}{\prod_j |s_j|^{2 \left( 1- \frac{1}{m_j} \right)}} \right).
\end{equation}

Hence, we have:
\begin{equation}\label{Perturbed Ric}
\Ric \omega_{t, \varepsilon} = \omega_{t, \varepsilon} + \ddc (\psi_\varepsilon - \varphi_{t, \varepsilon}) - t\widehat{\omega} - \sum_i a_i \theta_{i, \varepsilon}.
\end{equation}

\section{Stability of orbifold bundles on a log resolution}

\subsection{Semistability of the orbifold tangent bundle}

The goal of this section is to show:

\begin{prop}
    With notation as above, the orbifold tangent bundle $\mathcal{T}_{(\widehat{X}, \widehat{\Delta})}$ is semistable with respect to $\pi^*c_1(X, \Delta)$.
\end{prop}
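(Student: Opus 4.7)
The plan is to emulate the strategy of \cite{DGP24} in the orbifold setting provided by the smooth orbi-\'etale orbistructure on $(\widehat{X}, \widehat{\Delta})$. Let $\mathcal{F} \subset \mathcal{T}_{(\widehat{X}, \widehat{\Delta})}$ be a saturated orbisubsheaf of rank $r < n$; one must show $\mu_{\pi^*c_1(X,\Delta)}(\mathcal{F}) \leq \mu_{\pi^*c_1(X,\Delta)}(\mathcal{T}_{(\widehat{X}, \widehat{\Delta})})$. Since $[\omega_{t,\varepsilon}] = \pi^*c_1(X,\Delta) + t[\widehat{\omega}]$ and slopes depend continuously on the polarization, it is enough to compare $\int_{\widehat{X}} c_1(\mathcal{F}) \wedge \omega_{t,\varepsilon}^{n-1}$ with $\tfrac{r}{n}\int_{\widehat{X}} c_1(\mathcal{T}_{(\widehat{X}, \widehat{\Delta})}) \wedge \omega_{t,\varepsilon}^{n-1}$ and control the resulting error as $t, \varepsilon \to 0$.

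Next, on the orbifold big open set $\widehat{X}\setminus S$, where $S$ has codimension $\geq 2$ and contains the locus where $\mathcal{F}$ fails to be an orbisubbundle, the orbifold K\"ahler metric $\omega_{t,\varepsilon}$ induces smooth orbifold Hermitian metrics on $\mathcal{F}$ and on $\mathcal{T}_{(\widehat{X}, \widehat{\Delta})}$. The Gauss--Codazzi identity yields pointwise on orbifold charts
$$
n \cdot c_1(\mathcal{F}, h_{t,\varepsilon}) \wedge \omega_{t,\varepsilon}^{n-1} = \left[ \mathrm{tr}_{\mathcal{F}} \Lambda_{\omega_{t,\varepsilon}} R_{\mathcal{T}_{(\widehat{X}, \widehat{\Delta})}} - |\beta_{t,\varepsilon}|^2_{\omega_{t,\varepsilon}} \right] \omega_{t,\varepsilon}^n,
$$
where $\beta_{t,\varepsilon}$ is the second fundamental form of $\mathcal{F}$. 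The mean curvature of $\mathcal{T}_{(\widehat{X}, \widehat{\Delta})}$ coincides with $\Ric \omega_{t,\varepsilon}$ viewed as an endomorphism, and the Ricci identity \eqref{Perturbed Ric} rewrites this as $\omega_{t,\varepsilon}$ plus correction terms; hence $\mathrm{tr}_{\mathcal{F}} \Lambda_{\omega_{t,\varepsilon}} R_{\mathcal{T}_{(\widehat{X}, \widehat{\Delta})}} = r + \mathrm{err}(t,\varepsilon)$, with the error coming from the contractions of $\ddc(\psi_\varepsilon - \varphi_{t,\varepsilon})$, $t\widehat{\omega}$ and $\sum_i a_i \theta_{i,\varepsilon}$.

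Integrating the previous identity on $\widehat{X}\setminus S$, which is justified by the codimension-$2$ complement in the orbifold, applying the analogous formula to $\mathcal{T}_{(\widehat{X}, \widehat{\Delta})}$ itself (where the second fundamental form vanishes), and exploiting $-|\beta_{t,\varepsilon}|^2 \leq 0$ yields the desired slope inequality up to the error terms above. Controlling these errors as $t, \varepsilon \to 0$ is the \textbf{main obstacle}. The $\ddc$ contribution should be handled via integration by parts combined with the uniform $L^\infty$ bounds on $\psi_\varepsilon$ and $\varphi_{t,\varepsilon}$ recalled in Remark~\ref{convergence local smooth orbifold of potentials}; the contributions of $t\widehat{\omega}$ and of $\sum_i a_i \theta_{i,\varepsilon}$ involve smooth forms whose cohomological contribution to the slope vanishes as $t, \varepsilon \to 0$. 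The analysis requires care near $|E|$, where $\omega_{t,\varepsilon}$ degenerates along the exceptional divisor, and near $|\widehat{\Delta}|$, where it has conic singularities, but the smooth orbi-\'etale orbistructure and the uniform local estimates already established in the paper should suffice, in the spirit of \cite{GT22, DGP24}.
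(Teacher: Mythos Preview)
Your overall architecture matches the paper's: Gauss--Codazzi for the saturated orbisubsheaf, the expression \eqref{Perturbed Ric} for $\Ric\omega_{t,\varepsilon}$, the sign of the second fundamental form, and passage to the limit $t,\varepsilon\to 0$. The gap is in how you propose to kill the error terms, and it stems from one point you underweight: after restriction and projection to $\mathcal{F}$, none of these integrals is cohomological or amenable to a direct integration by parts.

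Concretely, the $\ddc$-term reads
\[
\int_{\widehat{X}} \tr_\End\!\bigl(\mathrm{pr}_F\,\sharp\,\ddc(\psi_\varepsilon-\varphi_{t,\varepsilon})_{|F}\bigr)\,\omega_{t,\varepsilon}^n,
\]
and because of $\mathrm{pr}_F(\,\cdot\,)_{|F}$ there is no closed form to which Stokes applies; $L^\infty$ bounds on the potentials alone do not force this to $0$. The paper instead bounds $\pm\sharp\,\ddc(\psi_\varepsilon-\varphi_{t,\varepsilon})$ by $\sharp(\omega_{\psi_\varepsilon}+\omega_{t,\varepsilon}+\omega_{\orb})$ pointwise and then splits $\widehat{X}$ into a small neighbourhood of $|E+\widehat{\Delta}|$ (controlled uniformly in $t,\varepsilon$ via a Chern--Levine--Nirenberg argument with Poincar\'e-type comparison metrics, Lemma~\ref{Bound near the divisor}) and a compact complement (controlled by the $\mathcal{C}^\infty_{\orb,\mathrm{loc}}$ convergence $\psi_\varepsilon,\varphi_{t,\varepsilon}\to\varphi$). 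Likewise, the $\sum_i a_i\theta_{i,\varepsilon}$ term is not ``cohomological'': the paper decomposes $\theta_{i,\varepsilon}$ into a semipositive piece $\beta_i=\tfrac{\varepsilon^2}{(|\sigma_i|^2+\varepsilon^2)^2}|D'_i\sigma_i|^2$ and a piece $\gamma_i=\tfrac{\varepsilon^2}{|\sigma_i|^2+\varepsilon^2}\theta_i$, and estimates each separately using that $\mathrm{pr}_F(\,\cdot\,)_{|F}$ and $\tr_\End$ preserve semipositivity. Finally, the $t\widehat{\omega}$ term is not shown to vanish; it simply has the right sign because $\sharp\widehat{\omega}\geq 0$. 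One further point you pass over quickly: the identity $\mu_{\omega_{t,\varepsilon}}(\mathcal{F})=\tfrac{1}{r}\int c_1^{\orb}(\mathcal{F},h_{t,\varepsilon})\wedge\omega_{t,\varepsilon}^{n-1}$ on the big open set itself requires justification in the orbifold setting (the paper's Lemma~\ref{Kobayashi orbifolde slope}), via a resolution of the ideal of the section $\det\mathcal{F}\to\bigwedge^r\mathcal{T}_{(\widehat{X},\widehat{\Delta})}$.
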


We endow $\mathcal{T}_{(\widehat{X}, \widehat{\Delta})}$ with the hermitian metric $h_{t, \varepsilon}$ induced by the orbifold (1,1)-form $\omega_{t, \varepsilon}$. Let $\mathcal{F}$ be a suborbisheaf of $\mathcal{T}_{(\widehat{X}, \widehat{\Delta})}$ of rank $r$. Up to consider the saturation of $\mathcal{F}$ that has a greater slope than $\mathcal{F}$, we assume that $\mathcal{F}$ is saturated in $\mathcal{T}_{(\widehat{X}, \widehat{\Delta})}$. Hence, one can find an open subset $\widehat{X}^\circ \subset \widehat{X}$ such that $\widehat{X} \backslash \widehat{X}^\circ$ is an analytic subset of codimension at least 2, and $\mathcal{F}_{|\widehat{X}^\circ}$ is a suborbibundle of $\mathcal{T}_{(\widehat{X}, \widehat{\Delta})}$. Hence, we can use the metric $h_{t, \varepsilon}$ in order to compute the slope of $\mathcal{F}$.

\begin{lemm}\label{Kobayashi orbifolde slope}
    We have:
    $$
    \frac{1}{r}\int_{\widehat{X}^\circ} c_1^\orb(\mathcal{F}, h_{t, \varepsilon}) \wedge \omega_{t, \varepsilon}^{n-1} = \mu_{\omega_{t, \varepsilon}}(\mathcal{F}) \underset{t, \varepsilon \longrightarrow 0}{\longrightarrow} \mu_{\pi^* c_1(X, \Delta)}(\mathcal{F}).
    $$
\end{lemm}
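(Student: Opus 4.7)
The plan is to treat the two assertions of the statement separately: the equality identifying the slope with the integral of the orbifold first Chern form, and the convergence of slopes as $(t,\varepsilon)\to(0,0)$.

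First, for the equality, I would exploit that for fixed $t,\varepsilon>0$ the density appearing in the perturbed Monge--Amp\`ere equation is smooth and strictly positive on local uniformizations, so $\omega_{t,\varepsilon}$ is a bona fide smooth orbifold K\"ahler form and the induced metric $h_{t,\varepsilon}$ on $\mathcal{T}_{(\widehat{X},\widehat{\Delta})}$ is smooth orbifold Hermitian. Restricting $h_{t,\varepsilon}$ to the suborbibundle $\mathcal{F}_{|\widehat{X}^\circ}$ and taking its determinant produces a smooth orbifold representative of $c_1(\det\mathcal{F})$. Pairing this form against the closed orbifold $(n-1,n-1)$-form $\omega_{t,\varepsilon}^{n-1}$, the usual Kobayashi computation (cf.\ \cite[Ch.~V]{Kob87}) carried out chart-by-chart on local uniformizations yields the intersection number $c_1(\det\mathcal{F})\cdot[\omega_{t,\varepsilon}]^{n-1}$; dividing by $r$ gives $\mu_{\omega_{t,\varepsilon}}(\mathcal{F})$.

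Second, for the convergence, I would observe that the K\"ahler class of $\omega_{t,\varepsilon}$ is
$$
[\omega_{t,\varepsilon}] \;=\; \pi^*c_1(X,\Delta) + t\,[\widehat{\omega}],
$$
which is independent of $\varepsilon$ and polynomial in $t$. Expanding by multilinearity of intersection,
$$
\mu_{\omega_{t,\varepsilon}}(\mathcal{F}) \;=\; \frac{1}{r}\, c_1(\det\mathcal{F})\cdot\bigl(\pi^*c_1(X,\Delta)+t[\widehat{\omega}]\bigr)^{n-1}
$$
is a polynomial in $t$, so letting $t,\varepsilon \to 0$ gives the limit $\mu_{\pi^*c_1(X,\Delta)}(\mathcal{F})$ by continuity of the intersection pairing.

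The main obstacle lies in the first step: verifying that the Chern--Weil identification of degree by integration survives in the orbifold setting and persists across the codimension $\geq 2$ locus $\widehat{X}\setminus\widehat{X}^\circ$ where $\mathcal{F}$ fails to be locally free. For fixed $t,\varepsilon>0$ the integrand is smooth orbifold away from this locus, and the standard Hartogs-type extension argument applied to the local orbifold potentials of $c_1^\orb(\det\mathcal{F},\det h_{t,\varepsilon})$ shows that the integral is absolutely convergent and computes the intersection number. The machinery needed to make this rigorous on local uniformizing charts is provided by the orbi-\'etale framework of \cite{GT22,Dai25}.
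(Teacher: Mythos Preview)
Your treatment of the convergence $\mu_{\omega_{t,\varepsilon}}(\mathcal{F}) \to \mu_{\pi^*c_1(X,\Delta)}(\mathcal{F})$ is correct and matches the paper's: once the slope is identified with the cohomological quantity $\frac{1}{r}c_1(\mathcal{F})\cdot[\pi^*\omega + t\widehat{\omega}]^{n-1}$, the limit follows by expanding in $t$, with Bedford--Taylor theory (boundedness of the potentials $\varphi_{t,\varepsilon}$) justifying that the integral is cohomological.

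The gap is in the equality. You correctly locate the difficulty at the codimension~$\geq 2$ locus $\widehat{X}\setminus\widehat{X}^\circ$, but the phrase ``Hartogs-type extension argument applied to the local orbifold potentials'' does not name a mechanism that closes it. Hartogs extension concerns holomorphic objects; what is at stake here is whether the smooth orbifold form $c_1^\orb(\mathcal{F}, h_{t,\varepsilon})$ on $\widehat{X}^\circ$, when integrated against $\omega_{t,\varepsilon}^{n-1}$, recovers the global intersection number. The induced metric $\det h_{t,\varepsilon}$ on $\det\mathcal{F}$ degenerates along $\widehat{X}\setminus\widehat{X}^\circ$, and neither absolute convergence of the integral nor an appeal to Kobayashi's smooth computation ``chart-by-chart'' explains by itself why no mass is lost or gained there.

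The paper's argument is concrete and different from what you sketch. One fixes a globally smooth orbifold metric $h_{\det\mathcal{F}}$ on $\det\mathcal{F}$ and forms the section $s$ of $\mathcal{H} := \bigwedge^r \mathcal{T}_{(\widehat{X},\widehat{\Delta})} \otimes (\det\mathcal{F})^{-1}$ coming from the inclusion $\mathcal{F}\hookrightarrow\mathcal{T}_{(\widehat{X},\widehat{\Delta})}$. On $\widehat{X}^\circ$ one has $c_1^\orb(\det\mathcal{F}, h_{\det\mathcal{F}}) - c_1^\orb(\det\mathcal{F}, h_{t,\varepsilon}^{\wedge r}) = \ddc\log|s|^2_h$, so the claim reduces to showing $\int_{\widehat{X}^\circ} \ddc\log|s|^2_h \wedge \omega_{t,\varepsilon}^{n-1} = 0$. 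The paper passes to a resolution $\mu: Z \to \widehat{X}$ of the ideal generated by $s$, on which $\mu^*\log|s|^2_h$ acquires purely divisorial singularities along the $\mu$-exceptional divisor $\widetilde{E}$: one gets $\ddc\mu^*\log|s|^2_h = -m[\widetilde{E}] - \textbf{i}\Theta(-m\widetilde{E}, h_{-m\widetilde{E}}) + \ddc\Psi$ with $\Psi$ continuous. The first two terms pair to zero with $\mu^*\omega_{t,\varepsilon}^{n-1}$ because $\widetilde{E}$ is $\mu$-exceptional (projection formula), and the $\ddc\Psi$ term vanishes by Bedford--Taylor since $\Psi$ is continuous and the potentials of $\omega_{t,\varepsilon}$ are bounded. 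This resolution-and-projection step is the substance your proposal is missing.
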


\begin{proof}
    The proof is based on the smooth case \cite[Proof of $(\ast\ast)$, p.~166]{Kob87}. We have an orbibundle map:
    $$ 
    \det \mathcal{F} \longrightarrow \bigwedge^r \mathcal{T}_{(\widehat{X}, \widehat{\Delta})}.
    $$
    Let us denote $\mathcal{H} := \bigwedge^r \mathcal{T}_{(\widehat{X}, \widehat{\Delta})} \otimes (\det \mathcal{F})^{-1}$. Hence, the previous map yields a section $s$ of $\mathcal{H}$ which does not vanish on $\widehat{X}^\circ$. We pick $ h_{\det \mathcal{F}}$ a hermitian metric on $\det \mathcal{F}$, and we denote $h := h_{t, \varepsilon}^{\wedge r} \otimes h_{\det \mathcal{F}}^\vee$ the induced metric on $\mathcal{H}$. In uniformizing charts, the actions of local Galois groups $G_\alpha$ are unitary with respect to local hermitian metrics $h_\alpha$ associated to $h$, and the local sections $s_\alpha$ defining $s$ are $G_\alpha$-invariant. Hence, local functions $|s_\alpha|^2_{h_\alpha}$ descend to $\widehat{X}$ and glue together, so that we consider the global orbifold function $|s|^2_h \in \mathcal{C}^\infty_\orb(\widehat{X}, \C)$.\\
    
    We consider a resolution of the ideal generated\footnote{The section $s$ is a map $\mathcal{O}_{\widehat{X}} \longrightarrow \mathcal{H}^\inv$ where $\mathcal{H}^\inv$ is the reflexive sheaf of invariant sections of the orbibundle $\mathcal{H}$. We consider a resolution of the kernel of this map.} by $s$, denoted by $\mu : Z \longrightarrow \widehat{X}$. Now, we analyze how $\mu^*|s|^2_h$ behave along the exceptional divisor $\widetilde{E}$ of $\nu$. Away from $|\widehat{\Delta}|$, the orbibundle $\mathcal{H}$ defines genuinely a vector bundle on $\widehat{X}$, and the section $s$ is locally given by $s = \sum_i f_i e_i$ where $(e_i)$ is a local frame of $\mathcal{H}$. Then, we have $|s|^2_h = \sum_i f_i \overline{f_j} u_{i\overline{j}}$ where $u_{i\overline{j}} := h(e_i, e_j)$.\\
    
	The resolution of $s$ is such that there exists $g$ and $g_i$ local holomorphic functions on $Z$ such that $\mu^* f_i = g_i g$ and the ideal sheaf $(g_i)$ is trivial (here $g$ is a local function defining the Cartier divisor $-m\widetilde{E}$). Hence, we have:
	$$
	\mu^* \log |s|^2_h = \log |g|^2 + \log (\sum_i g_i \overline{g_j} \mu^* u_{i\overline{j}}),
	$$
	where the latter term in the right hand side is smooth. Near a point of the strict transform of $\widehat{\Delta}$, we have:
	$$ \mu^*\log |s|^2_h = \log (\sum_i g_i \overline{g_j} \mu^* u_{i\overline{j}}),
	$$
	where this term is not necessarily smooth on $Z$ as it is the pullback of an orbifold function on $\widehat{X}$.\\
	 
	 Let us endow the line bundle $\mathcal{O}(-m \widetilde{E})$ with a hermitian metric $h_{-m \widetilde{E}}$. Hence, thanks to the cocycles identities satisfied by the local functions defining $-m \widetilde{E}$, there exists a continuous function $\Psi \in \mathcal{C}^0(Z)$ such that the following equality of current over $Z$ holds:
	 
	 $$
	 \ddc \mu^* \log |s|^2_h = -m \left[ \widetilde{E} \right] - \textbf{i}\Theta(-m \widetilde{E}, h_{-m\widetilde{E}}) + \ddc \Psi.
	 $$
    Hence, we get:
    \begin{equation}
    \begin{split}    
    \int_{\widehat{X}^\circ} \ddc \log |s|_h^2 \wedge \omega_{t, \varepsilon}^{n-1} 
    & = \int_{Z} \mu^*\ddc \log |s|_h^2 \wedge \mu^*\omega_{t, \varepsilon}^{n-1} \\
    & = -m \int_{|\widetilde{E}|} \mu^*\omega_{t, \varepsilon}^{n-1} - \int_{Z} \textbf{i}\Theta(-m \widetilde{E}, h_{-m\widetilde{E}}) \wedge \mu^*\omega_{t, \varepsilon}^{n-1} \\
    & \qquad + \int_{Z } \ddc \Psi \wedge \mu^*\omega_{t, \varepsilon}^{n-1}.
    \end{split}
    \end{equation}
    Since $\mu^*\omega_{t, \varepsilon}^{n-1}$ is $\widetilde{E}$-exceptional, we have:
    $$
    \int_{|\widetilde{E}|} \mu^*\omega_{t, \varepsilon}^{n-1} = \int_{Z} \textbf{i}\Theta(-m \widetilde{E}, h_{-m\widetilde{E}}) \wedge \mu^*\omega_{t, \varepsilon}^{n-1} = 0.
    $$
    Moreover, we have as a consequence of the definition of $\ddc \Psi \wedge \mu^*\omega_{t, \varepsilon}^{n-1}$:
    $$
    \int_{Z } \ddc \Psi \wedge \mu^*\omega_{t, \varepsilon}^{n-1} = 0.
    $$
    Note that the forms $\omega_{t, \varepsilon}$ are singular, but they have bounded potentials $\varphi_{t, \varepsilon}$, so that the computation of these integrals is purely cohomological thanks to classical Bedford--Taylor results (apply for instance \cite[Thm.~2.1]{Dar19} with \cite[Thm.~3.18]{GZ17}).\\
    
    Now, we deduce by considering local trivializations of the orbibundle $\det \mathcal{F}$ and $\bigwedge^r \mathcal{T}_{(\widehat{X}, \widehat{\Delta})}$ that $\ddc \log |s|^2_h = c_1^\orb(\det \mathcal{F}, h_{\det \mathcal{F}}) - c_1^\orb(\det \mathcal{F}, h^{\wedge r})$ on $\widehat{X}^\circ$. We deduce the sought equality by integrating over $\widehat{X}^\circ$.\\
    
    Because of boundedness of potentials $\varphi_{t, \varepsilon}$, the computation of the slope is purely cohomological thanks to classical Bedford--Taylor results (see e.g. \cite[Lemma.~2.2]{Dar19}) so that:
    $$
    \mu_{\omega_{t, \varepsilon}}(\mathcal{F}) = \frac{1}{r} c_1(\mathcal{F}) \cdot \left[ \pi^*\omega + t\widehat{\omega} \right]^{n-1} \underset{t, \varepsilon \longrightarrow 0}{\longrightarrow} \mu_{\pi^* c_1(X, \Delta)}(\mathcal{F}).
    $$
\end{proof}

Now, let us denote by $\beta_{t, \varepsilon} \in \mathcal{C}^\infty_{(0,1)}(\widehat{X}, \mathcal{H}om(\mathcal{F}, \mathcal{F}^\perp))$ the second fundamental form obtained from the $\mathcal{C}^\infty$-splitting $\mathcal{T}_{(\widehat{X}, \widehat{\Delta})} = \mathcal{F} \oplus \mathcal{F}^\perp$ given by the metric $h_{t, \varepsilon}$. We get that:
$$
\textbf{i}\Theta (F, h_{t,\varepsilon}) =  \mathrm{pr}_F \left(\textbf{i}\Theta(\mathcal{T}_{(\widehat{X}, \widehat{\Delta})}, h_{t, \varepsilon})_{|F} \right) + \textbf{i}\beta_{t, \varepsilon} \wedge \beta_{t, \varepsilon}^*.
$$

Hence, we have:

\begin{align*}
    \mu_{\omega_{t, \varepsilon}}(\mathcal{F}) 
    & = \frac{1}{r} \int_{\widehat{X}} \tr_\End \left( \textbf{i}\Theta(F, h_{t,\varepsilon}) \right) \wedge \omega_{t, \varepsilon}^{n-1} \\
    & = \frac{1}{r} \int_{\widehat{X}} \tr_\End \left(  \mathrm{pr}_F \textbf{i}\Theta(\mathcal{T}_{(\widehat{X}, \widehat{\Delta})}, h_{t, \varepsilon})_{|F} \right) \wedge \omega_{t, \varepsilon}^{n-1} \\
    & \qquad + \frac{1}{r} \int_{\widehat{X}} \tr_\End \left(\textbf{i} \beta_{t, \varepsilon} \wedge \beta_{t, \varepsilon}^* \right) \wedge \omega_{t, \varepsilon}^{n-1}.
\end{align*}

Now, we introduce musical isomorphisms: if we have an orbifold K\"ahler form $\Omega$ inducing a metric $h$ and an orbifold $(1,1)$-form $\gamma$, we define $\sharp_\Omega \gamma$ (or $\sharp \gamma$ if there is no ambiguity) as the section of $\End(\mathcal{T}_{(\widehat{X}, \widehat{\Delta})})$ satisfying the equation:
$$
\tr_\End (\sharp \gamma) \Omega^n = n \gamma \wedge \Omega^{n-1}.
$$

Hence, we have:

\begin{align*}
    \mu_{\omega_{t, \varepsilon}}(\mathcal{F}) = \frac{1}{nr} \int_{\widehat{X}} & \tr_\End \left( \textbf{i} \mathrm{pr}_F \sharp \Ric {\omega_{t, \varepsilon}}_{|F} \right) \omega_{t, \varepsilon}^n \\
    & + \frac{1}{r} \int_{\widehat{X}} \tr_\End \left(\textbf{i}\beta_{t, \varepsilon} \wedge \beta_{t, \varepsilon}^* \right) \wedge \omega_{t, \varepsilon}^{n-1}.
\end{align*}

We deduce by \eqref{Perturbed Ric}:

\begin{align*}\label{Slope of the subsheaf}
    \tag*{$(\ast)$} \mu_{\omega_{t, \varepsilon}}(\mathcal{F}) & = \frac{1}{ nr} \underbrace{\int_{\widehat{X}} \tr_\End (\mathrm{pr}_F \sharp {\omega_{t, \varepsilon}}_{|F}) \omega_{t, \varepsilon}^n}_{\textbf{(I)}} -t \frac{1}{ nr} \underbrace{\int_{\widehat{X}} \tr_\End (\mathrm{pr}_F \sharp \widehat{\omega}_{|F}) \omega_{t, \varepsilon}^n}_{\textbf{(II)}} \\
    & \qquad + \frac{1}{ nr} \underbrace{\int_{\widehat{X}} \tr_\End (\mathrm{pr}_F \sharp \ddc (\psi_\varepsilon - \varphi_{t,\varepsilon})_{|F}) \omega_{t, \varepsilon}^n}_{\textbf{(III)}} \\
    & \qquad - \frac{1}{ nr} \underbrace{\int_{\widehat{X}} \tr_\End \left( \mathrm{pr}_F \sharp \sum_i a_i {\theta_{i,\varepsilon}}_{|F} \right) \omega_{t, \varepsilon}^n}_{\textbf{(IV)}} \\
    & \qquad + \frac{1}{ r} \underbrace{\int_{\widehat{X}} \tr_\End \left(\textbf{i}\beta_{t, \varepsilon} \wedge \beta_{t, \varepsilon}^* \right) \wedge \omega_{t, \varepsilon}^{n-1}}_{\textbf{(V)}}.
\end{align*}

\subsubsection{Analysis of the different terms}\label{Analysis of different terms}

We analyze here the different terms appearing in the above equality.\\

\textbf{Term (I) :} We have:
$$
\frac{1}{ nr} \int_{\widehat{X}} \tr_\End (\mathrm{pr}_F \sharp {\omega_{t, \varepsilon}}_{|F}) \omega_{t, \varepsilon}^n = \frac{1}{n} \int_{\widehat{X}} \omega_{t, \varepsilon}^n.
$$
But thanks to the boundedness of $\varphi_{t, \varepsilon}$, we get by classical Bedford--Taylor results (see e.g. \cite[Thm.~2.1]{Dar19}):
$$
\frac{1}{n} \int_{\widehat{X}} \omega_{t, \varepsilon}^n = \frac{1}{n} \left[\pi^* \omega + t\widehat{\omega} \right]^n \underset{t, \varepsilon \longrightarrow 0}{\longrightarrow} \frac{1}{n} \left[\pi^* \omega\right]^n = \mu_{\pi^*\omega} (\mathcal{T}_{(\widehat{X}, \widehat{\Delta})}).\\
$$

\textbf{Term (II) :} This term is positive, since $\sharp \widehat{\omega}$ is a positive endomorphism.\\

\textbf{Term (III) :} Let us show that this term converges to zero when $t, \varepsilon$ go to 0. We adapt the arguments from \cite[Term I~-~Sect.2.2]{DGP24}.

\emph{Bound near $|E + \widehat{\Delta}|$ :} We use the following lemma.

\begin{lemm}\label{Bound near the divisor}
    With notation as above, for any $\delta > 0$, there exists a neighborhood $U_\delta$ of $|E + \widehat{\Delta}|$ such that:
    $$
    \forall t, \varepsilon > 0, \int_{U_\delta} (\omega_{\psi_\varepsilon} + \omega_{t, \varepsilon} + \omega_\orb) \wedge \omega_{t, \varepsilon}^{n-1} < \delta,
    $$
    where $\omega_{\psi_\varepsilon} := \pi^*\omega + t \widehat{\omega} + \ddc \psi_\varepsilon$ and $\omega_\orb$ is an orbifold metric on $(\widehat{X}, \widehat{\Delta})$.
\end{lemm}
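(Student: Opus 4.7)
The strategy is to treat the three positive currents $\omega_{\psi_\varepsilon}\wedge\omega_{t,\varepsilon}^{n-1}$, $\omega_{t,\varepsilon}^n$, and $\omega_\orb\wedge\omega_{t,\varepsilon}^{n-1}$ separately, and the fulcrum is a uniform $L^p$-bound on the density of $\omega_{t,\varepsilon}^n$ against $\mathrm{d}V$. The perturbed Monge--Amp\`ere equation gives
$$
\omega_{t,\varepsilon}^n \;=\; N_{t,\varepsilon}\, e^{-\psi_\varepsilon}\, \frac{\prod_i(|\sigma_i|^2+\varepsilon^2)^{a_i}}{\prod_j |s_j|^{2(1-1/m_j)}}\,\mathrm{d}V,
$$
and the uniform $L^\infty$-bounds on $\psi_\varepsilon$ and $N_{t,\varepsilon}$, combined with $a_i>-1$ and $1-1/m_j<1$, imply that this density is uniformly bounded in $L^p(\widehat{X},\mathrm{d}V)$ for some $p>1$. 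Uniform absolute continuity of the $L^p$ norm then yields: for any $\eta>0$ there is a neighborhood $U_\eta$ of $|E+\widehat{\Delta}|$ with $\int_{U_\eta}\omega_{t,\varepsilon}^n<\eta$ independently of $t,\varepsilon$.

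To propagate this to the mixed term $\omega_{\psi_\varepsilon}\wedge\omega_{t,\varepsilon}^{n-1}$, I pick a smooth cutoff $\chi$ supported in a slightly enlarged neighborhood and equal to $1$ on $U_\eta$, then use the cohomological identity $\omega_{\psi_\varepsilon}=\omega_{t,\varepsilon}+\ddc(\psi_\varepsilon-\varphi_{t,\varepsilon})$ and integrate by parts:
$$
\int \chi\, \omega_{\psi_\varepsilon}\wedge\omega_{t,\varepsilon}^{n-1}
\;=\; \int \chi\, \omega_{t,\varepsilon}^n \;+\; \int (\psi_\varepsilon-\varphi_{t,\varepsilon})\, \ddc\chi\wedge\omega_{t,\varepsilon}^{n-1}.
$$
The first piece is handled by the diagonal estimate, and the second by the uniform $L^\infty$-bounds on both potentials together with a careful choice of $\chi$ making $\int|\ddc\chi|\wedge\omega_{t,\varepsilon}^{n-1}$ uniformly small. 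For the term $\omega_\orb\wedge\omega_{t,\varepsilon}^{n-1}$, I split $U_\eta$ into a part lying in a compact $K\subset\widehat{X}\setminus|E|$, where Remark~\ref{convergence local smooth orbifold of potentials} gives $\omega_\orb\leq C_K\omega_{t,\varepsilon}$ uniformly and hence $\omega_\orb\wedge\omega_{t,\varepsilon}^{n-1}\leq C_K\omega_{t,\varepsilon}^n$, and a part near $|E|$ where $\omega_\orb$ is orbifold-smooth and one applies the same cutoff technique as for $\omega_{\psi_\varepsilon}$, read in local uniformizing charts.

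The main obstacle is constructing the cutoff $\chi$ so that the boundary contribution $\int|\ddc\chi|\wedge\omega_{t,\varepsilon}^{n-1}$ is genuinely small uniformly in $t,\varepsilon$. The natural candidate is to take $\chi$ as a convex function of a model plurisubharmonic-type potential such as $u := \sum_i \log(|\sigma_i|^2+\delta_0^2)+\sum_j(1-1/m_j)\log|s_j|^2$, so that $\ddc\chi = \chi'(u)\,\ddc u + \chi''(u)\, du\wedge d^c u$ decomposes into a piece comparable to the singular parts of $\Ric\,\mathrm{d}V$ already present and a piece concentrated on a thin shell, both amenable to the $L^p$-density bound via another application of absolute continuity. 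Summing the three estimates then yields the conclusion for a suitable $U_\delta$.
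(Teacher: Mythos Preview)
Your opening move --- the uniform $L^p$ bound on the Monge--Amp\`ere density and the resulting uniform absolute continuity --- is correct and does dispose of $\int_{U_\delta}\omega_{t,\varepsilon}^n$ cleanly. The gap is in propagating this to the mixed terms. After your integration by parts the crux is to make $\int_{\widehat{X}}|\ddc\chi|\wedge\omega_{t,\varepsilon}^{n-1}$ uniformly small, and the candidate $\chi=f(u)$ with $u=\sum_i\log(|\sigma_i|^2+\delta_0^2)+\sum_j(1-\tfrac{1}{m_j})\log|s_j|^2$ does not achieve this. First, if $\delta_0>0$ then $u$ is bounded below near $|E|\setminus|\widehat{\Delta}|$, so $\chi$ cannot equal $1$ there. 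Second, even with $\delta_0=0$, the term $\chi''(u)\,du\wedge d^cu$ has the wrong scale: in the model $u=\log|z_1|^2$ one computes that $\int_{e^M\leq|z_1|^2\leq e^{M+1}}\tfrac{i\,dz_1\wedge d\bar z_1}{|z_1|^2}$ is a positive constant \emph{independent of $M$}, so a unit-width $u$-shell carries mass bounded away from zero. Trading $\chi''\,du\wedge d^cu$ for $-\chi'\,\ddc u$ via Stokes only shifts the difficulty to $\int_{\text{shell}}\widehat{\omega}\wedge\omega_{t,\varepsilon}^{n-1}$, a mixed Monge--Amp\`ere mass to which your $L^p$-density argument does \emph{not} apply: there is no explicit $L^p$ density for $\widehat{\omega}\wedge\omega_{t,\varepsilon}^{n-1}$, since near $|E|$ some eigenvalues of $\omega_{t,\varepsilon}$ degenerate while others may blow up, and only their full product is controlled by the equation. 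Your treatment of $\omega_\orb\wedge\omega_{t,\varepsilon}^{n-1}$ near $|E|$ inherits the same circularity.

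The paper closes this loop with two ingredients your proposal is missing. The first is the \emph{double-logarithm} cutoff $\rho_\delta=\Xi_\delta\bigl(\log\log\tfrac{1}{|s_{E+\widehat{\Delta}}|^2}\bigr)$: this is precisely what forces $\pm\ddc\rho_\delta\leq C\,\omega_{E+\widehat{\Delta}}$ with $C$ independent of $\delta$, where $\omega_{E+\widehat{\Delta}}$ is a Poincar\'e-type metric of finite volume along $|E+\widehat{\Delta}|$. A single-log potential gives at best $du\wedge d^cu$ comparable to the cusp form $\tfrac{i\,dz\wedge d\bar z}{|z|^2}$, whose shell masses do not decay. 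The second ingredient is an induction: once everything is bounded by $C\int_{V_\delta}\omega_{E+\widehat{\Delta}}\wedge\omega_{t,\varepsilon}^{n-1}$, one reruns the same cutoff argument to replace $\omega_{t,\varepsilon}$'s by $\omega_{E+\widehat{\Delta}}$'s one at a time, terminating at $\int_{V_{\delta'}}\omega_{E+\widehat{\Delta}}^n$, the mass of a \emph{fixed} finite measure on a shrinking set. Without the log-log cutoff (or an equivalent device) and this iterated reduction, the mixed-mass estimate cannot be closed.
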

We need to consider here $\omega_{\psi_\varepsilon} + \omega_{t, \varepsilon} + \omega_\orb$ in order to get positive orbifold metrics. For the convenience of the reader, we remind the main arguments to show this lemma. Let us consider a family of cutoff functions $\Xi_\delta: \R \longrightarrow \R$ such that
$$
\Xi_\delta(x) =
\left\lbrace 
\begin{array}{cc}
     0 & \text{ if } x \leq \delta^{-1} \\
     1 & \text{ if } x \geq 1 + \delta^{-1}
\end{array}
\right.
.
$$
We can moreover assume that the derivative and the hessian of $\Xi_\delta$ are uniformly bounded independently of $\delta$. We consider the functions $\rho_\delta := \Xi_\delta \left( \log \log \frac{1}{|s_{E + \widehat{\Delta}}|^2} \right)$. Let $U_\delta$ be an open subset included in $\left\lbrace |s_{E + \widehat{\Delta}}|^2 < e^{-e^{1 + \delta^{-1}}} \right\rbrace$.

Using the Chern--Levine--Nirenberg inequality, we have:
\begin{equation}
    \int_{U_\delta} \left( \omega_{\psi_\varepsilon} + \omega_{t, \varepsilon} + \omega_\orb \right) \wedge \omega_{t, \varepsilon}^{n-1} \leq \int_{\widehat{X}} \rho_\delta \left( \omega_{\psi_\varepsilon} + \omega_{t, \varepsilon} + \omega_\orb \right) \wedge \omega_{t, \varepsilon}^{n-1}
\end{equation}
and the latter integral is equal to:
\begin{equation}
    2\underbrace{\int_{\widehat{X}} \rho_\delta  \left( \pi^*\omega + t \widehat{\omega} \right) \wedge \omega_{t, \varepsilon}^{n-1}}_{\textbf{(A)}} + \underbrace{\int_{\widehat{X}} \rho_\delta \ddc \left( \psi_\varepsilon + \varphi_{t, \varepsilon} \right) \wedge \omega_{t, \varepsilon}^{n-1}}_{\textbf{(B)}} + \underbrace{\int_{\widehat{X}} \rho_\delta \; \omega_\orb \wedge \omega_{t, \varepsilon}^{n-1}}_{\textbf{(C)}}.
\end{equation}

Let us deal with term \textbf{(A)}. On a neighborhood $U$ of $|E + \widehat{\Delta}|$, the metric $\pi^* \omega + t \widehat{\omega}$ is bounded by $\omega_{E + \widehat{\Delta}}$, a metric with Poincar\'e singularities along $|E + \widehat{\Delta}|$ up to a constant depending only on $U$. Moreover, the function $\rho_\delta$ is supported in the open set $V_\delta := \lbrace |s_{E + \widehat{\Delta}}|^2 < e^{-e^{\delta^{-1}}} \rbrace$, and it takes values in $\left[ 0,1 \right]$, so this term is bounded by:
$$
C \int_{V_\delta} \omega_{E + \widehat{\Delta}} \wedge \omega_{t, \varepsilon}^{n-1},
$$
where $C$ is a constant depending on the neighborhood $U$ (in particular, it is independent of $\delta$).\\

Let us deal with term \textbf{(B)}. By Stokes' theorem, we have:
$$
 \int_{\widehat{X}} \rho_\delta \; \ddc \left( \psi_\varepsilon + \varphi_{t, \varepsilon} \right) \wedge \omega_{t, \varepsilon}^{n-1} = \int_{\widehat{X}} \left( \psi_\varepsilon + \varphi_{t, \varepsilon} \right)\ddc \rho_\delta  \wedge \omega_{t, \varepsilon}^{n-1}.
$$
But if we denote $u := \log \log \frac{1}{|s_{E + \widehat{\Delta}}|^2}$, we have in local coordinates:
$$
\ddc \rho_\delta = \sum_{j, \overline{k}} \left(\frac{\partial^2 u }{\partial z_j \partial \overline{z_k}} \Xi'_\delta \circ u + \frac{\partial u }{\partial z_j}\frac{\partial u }{\partial \overline{z_k}} \Xi''_\delta \circ u \right)\; \textbf{i}\mathrm{d} z_j \wedge \mathrm{d}\overline{z_k}.
$$
The latter term is bounded from above up to a constant independent of $\delta$ by $\omega_{E + \widehat{\Delta}}$. Moreover, $\ddc \rho_\delta$ is supported in $V_\delta$, and $\psi_\varepsilon$ and $\varphi_{t, \varepsilon}$ are uniformly bounded in $t, \varepsilon$. Hence, the term \textbf{(B)} is bounded by:
$$
C\int_{V_\delta} \omega_{E + \widehat{\Delta}} \wedge \omega_{t, \varepsilon}^{n-1}.
$$
Let us deal with term \textbf{(C)}. We show that $\omega_\orb$ is bounded by $\omega_{E + \widehat{\Delta}}$ by comparing local forms of the metrics, so that:
$$
\int_{\widehat{X}} \rho_\delta \; \omega_\orb \wedge \omega_{t, \varepsilon}^{n-1} \leq \int_{V_\delta} \omega_\orb \wedge \omega_{t, \varepsilon}^{n-1} \leq C \int_{V_\delta} \omega_{E + \widehat{\Delta}} \wedge \omega_{t, \varepsilon}^{n-1}.
$$
Now, by induction, we can show similarly the existence of $\delta' > 0$ depending on $\delta$ such that:
$$
\int_{V_\delta} \omega_{E + \widehat{\Delta}} \wedge \omega_{t, \varepsilon}^{n-1}  \leq C \int_{V_{\delta'}} \omega_{E + \widehat{\Delta}}^n.
$$
The quantity $\delta'$ depends on $\delta$, but one can choose $U_\delta$ sufficiently small in order to have $\displaystyle \int_{V_{\delta'}} \omega_{E + \widehat{\Delta}}^n < \frac{\delta}{C^2}$. This shows the bound near $|E + \widehat{\Delta}|$. \qed \\

Now, we can observe that, as hermitian endomorphisms, we have\footnote{This is due to the fact that $\omega_{t, \varepsilon} \geq 0$ and $\omega_{\psi_\varepsilon} + \frac{1}{2} \omega_\orb \geq \left( \frac{1}{2} -\kappa(\varepsilon) \right) \omega_\orb$ as a consequence of properties of $\psi_\varepsilon$. This holds for $t, \varepsilon$ smaller than constants depending only on the function $\kappa$.}:
$$
\pm \sharp \ddc(\psi_\varepsilon - \varphi_{t, \varepsilon}) \leq \sharp \left( \omega_{\psi_\varepsilon} + \omega_{t, \varepsilon} + \omega_\orb \right).
$$
Since the operations $\mathrm{pr}_F (.)_{|F}$, $\tr_\End$ preserve positivity (hence inequalities), we have:
$$
\left| \int_{U_{\delta}} \tr_\End (\mathrm{pr}_F \sharp \ddc (\psi_\varepsilon - \varphi_{t,\varepsilon})_{|F}) \omega_{t, \varepsilon}^n \right| \leq \delta.
$$

\emph{Bound on a compact subset of $\widehat{X} \backslash |E + \widehat{\Delta}|$ :}
Let $K$ be a compact subset of $\widehat{X} \backslash |E + \widehat{\Delta}|$. The potentials $\varphi_{t, \varepsilon}, \psi_\varepsilon$ converge to $\varphi$ in $\mathcal{C}^\infty(K)$. Hence, there exists $\eta > 0$ such that: 
$$
\forall t, \varepsilon < \eta, \Vert \psi_\varepsilon - \varphi_{t, \varepsilon} \Vert_{\mathcal{C}^2(K)} < \delta.
$$
For such $t, \varepsilon$, we have the bound on $K$: $\pm \ddc \left( \psi_\varepsilon - \varphi_{t, \varepsilon} \right) < \delta \widehat{\omega}$, hence there exists a constant $C > 0$ independent of $t, \varepsilon$ such that:
\begin{equation}\label{Bound on compact subsets}
\left| \int_K \tr_\End (\mathrm{pr}_F \sharp \ddc (\psi_\varepsilon - \varphi_{t,\varepsilon})_{|F}) \omega_{t, \varepsilon}^n \right| \leq \delta \int_K \widehat{\omega} \wedge \omega_{t, \varepsilon}^{n-1} < C \delta
\end{equation}

\emph{Computation of the limit :} We pick $\delta > 0$. Then, thanks to Lemma~\ref{Bound near the divisor}, there exists a neighborhood $U_\delta$ of $|E + \widehat{\Delta}|$ such that:
$$
\left| \int_{U_{\delta}} \tr_\End (\mathrm{pr}_F \sharp \ddc (\psi_\varepsilon - \varphi_{t,\varepsilon})_{|F}) \omega_{t, \varepsilon}^n \right| \leq \frac{\delta}{2}.
$$
Hence, using \eqref{Bound on compact subsets} on the compact subset $\widehat{X} \backslash U_\delta \subseteq \widehat{X} \backslash |E + \widehat{\Delta}|$, there exists $\eta > 0$ such that:
$$
\forall t, \varepsilon < \eta, \left| \int_{\widehat{X} \backslash U_{\delta}} \tr_\End (\mathrm{pr}_F \sharp \ddc (\psi_\varepsilon - \varphi_{t,\varepsilon})_{|F}) \omega_{t, \varepsilon}^n \right| \leq \frac{\delta}{2}.
$$
By combining these two inequalities, we get that the term \textbf{(III)} is less than $\delta$ for $t, \varepsilon$ sufficiently small. In other words:
$$
\lim_{t, \varepsilon \to 0} \int_{\widehat{X}} \tr_\End (\mathrm{pr}_F \sharp \ddc (\psi_\varepsilon - \varphi_{t,\varepsilon})_{|F}) \omega_{t, \varepsilon}^n = 0.\\
$$

\textbf{Term (IV) :}
We denote by $D'_i$ the $(1,0)$ part of the Chern connection of $h_i$. We have:

$$
\theta_{i,\varepsilon} = \underbrace{\frac{\varepsilon^2}{(|\sigma_i|^2 + \varepsilon^2)^2}|D'_i\sigma_i|^2}_{=:\beta_i} + \underbrace{\frac{\varepsilon^2}{|\sigma_i|^2 + \varepsilon^2} \theta_i}_{=: \gamma_i}
$$
Let us show that:
$$
\int_{\widehat{X}} \tr_\End(\mathrm{pr}_F{\theta_{i, \varepsilon}}_{|F}) \wedge \omega_{t, \varepsilon}^{n} \underset{t, \varepsilon \longrightarrow 0}{\longrightarrow} 0.
$$

In order to keep notation light, we drop the index $i$ referring to the irreducible component $E_i$ of $E$. First, we deal with $\gamma$. There exists a constant $C > 0$ independent of $t, \varepsilon$ such that $\pm \theta \leq C \widehat{\omega}$. Then, we have:
\begin{align*}
    \left| \int_{\widehat{X}} \tr_\End(\mathrm{pr}_F \sharp \gamma_{|F}) \omega_{t, \varepsilon}^n \right| & \leq C \int_{\widehat{X}} \frac{\varepsilon^2}{|\sigma|^2 + \varepsilon^2} \tr_\End(\mathrm{pr}_F \sharp \widehat{\omega}_{|F}) \omega_{t, \varepsilon}^n \\
    & = Cn \int_{\widehat{X}} \frac{\varepsilon^2}{|\sigma|^2 + \varepsilon^2} \widehat{\omega} \wedge \omega_{t, \varepsilon}^{n-1}
\end{align*}
If we denote $V_{\varepsilon} := \lbrace |\sigma|^2 < \varepsilon \rbrace$, then:
\begin{align*}
    \int_{\widehat{X} \backslash V_{\varepsilon}} \frac{\varepsilon^2}{|\sigma|^2 + \varepsilon^2} \widehat{\omega} \wedge \omega_{t, \varepsilon}^{n-1} 
    & \leq \varepsilon \int_{\widehat{X} \backslash V_{\varepsilon}} \widehat{\omega} \wedge \omega_{t, \varepsilon}^{n-1} \\
    & \leq \varepsilon \int_{\widehat{X}} \widehat{\omega} \wedge \omega_{t, \varepsilon}^{n-1} = \varepsilon \left[ \widehat{\omega} \right] \cdot \left[ \pi^* \omega + t\widehat{\omega} \right]^{n-1} 
\end{align*}

On the other hand, for any $\delta > 0$, there exists $\eta > 0$ such that for any $0 < \varepsilon < \eta$, we have by using the same computations as the term \textbf{(III)}:
\begin{align*}
    \int_{V_{ \varepsilon}} \frac{\varepsilon^2}{|\sigma|^2 + \varepsilon^2} \widehat{\omega} \wedge \omega_{t, \varepsilon}^{n-1} & \leq \int_{V_{ \varepsilon}} \widehat{\omega} \wedge \omega_{t, \varepsilon}^{n-1} \leq C' \int_{U_\delta} \omega_{E + \widehat{\Delta}}^n \leq C'\delta
\end{align*}
where $C' > 0$ is a constant depending on $\delta$.
This means that:
$$
\int_{V_{\varepsilon}} \frac{\varepsilon^2}{|\sigma|^2 + \varepsilon^2} \widehat{\omega} \wedge \omega_{t, \varepsilon}^{n-1} \underset{\varepsilon \longrightarrow 0}{\longrightarrow} 0.
$$
Hence, we have:
$$
\int_{\widehat{X}} \tr_\End(\mathrm{pr}_F \sharp \gamma_{|F}) \omega_{t, \varepsilon}^n \underset{\varepsilon \longrightarrow 0}{\longrightarrow} 0.
$$

Now we deal with the term $\beta$. We observe that $\sharp \beta$ is a positive endomorphism, hence we have:
$$
0 \leq \int_{\widehat{X}} \tr_\End(\mathrm{pr}_F \sharp \beta_{|F}) \omega_{t, \varepsilon}^n \leq n \int_{\widehat{X}} \beta \wedge \omega_{t, \varepsilon}^{n-1}.
$$
Since $\beta = \theta_\varepsilon - \gamma$, we infer:
$$
\int_{\widehat{X}} \beta \wedge \omega_{t, \varepsilon}^{n-1} = \int_{\widehat{X}} \theta_\varepsilon \wedge \omega_{t, \varepsilon}^{n-1} - \int_{\widehat{X}} \frac{\varepsilon^2}{|\sigma|^2 + \varepsilon^2} \theta \wedge \omega_{t, \varepsilon}^{n-1}.
$$
But the first integral in the right hand side is equal to $c_1(E_i) \cdot \left[ \pi^* \omega + t \widehat{\omega} \right]^{n-1} \underset{t \longrightarrow 0}{\longrightarrow} 0$, and the second one converges to zero when $\varepsilon$ goes to zero thanks to the computations about $\gamma$.
We conclude that term \textbf{(IV)} converges to zero when $t, \varepsilon$ go to zero.\\

\textbf{Term (V) :} This term is negative by computations \cite[Proof of 5.8.2]{Kob87}.\\

We conclude by taking the limit when $t, \varepsilon$ go to zero in \ref{Slope of the subsheaf}, so that:
$$
\mu_{\pi^*c_1(X, \Delta)}(\mathcal{F}) \leq \mu_{\pi^*c_1(X, \Delta)}(\mathcal{T}_{(\widehat{X}, \widehat{\Delta})}).
$$

We deduce that $\mathcal{T}_{(\widehat{X}, \widehat{\Delta})}$ is semistable with respect to $\pi^*c_1(X, \Delta)$.

\subsection{Semistability of an extension of the orbifold tangent bundle}

\subsubsection{Extensions of the orbifold tangent bundle}

We recall that the pair $(\widehat{X}, \widehat{\Delta})$ is endowed with a smooth orbi-\'etale orbistructure $\mathcal{C}$ and some geometric orbibundles are defined. See Definition~\ref{Def orbistructure orbi-etale} and Example~\ref{exemple orbibundles} for the notation used. Let us define first the orbibundle $\widehat{\mathcal{E}}$ that plays the role of the extension of $\mathcal{T}_{(\widehat{X}, \widehat{\Delta})}$ by the trivial orbibundle $\mathcal{O}_{\widehat{X}}$ induced by the class $\pi^*(-(K_X + \Delta))$. There exists an integer $m$ such that $-m(K_X + \Delta)$ is Cartier. The pullback of this divisor by the morphism $\pi$ induces an orbibundle of rank one $\mathcal{L}$. Assume that the orbibundle is trivialized on each local uniformizing chart of $\mathcal{C}$, so that we have on intersections functions $\Phi_{\alpha\beta} : V_{\alpha\beta} \longrightarrow \C^*$. Then if we consider $\omega_{0, \alpha\beta} := \frac{1}{m} \frac{\mathrm{d}\Phi_{\alpha\beta}}{\Phi_{\alpha\beta}}$, one can show that it satisfies a cocycle relation and it defines an orbibundle $\widehat{\mathcal{E}}$ that is an extension of $\mathcal{T}_{(\widehat{X}, \widehat{\Delta})}$ by $\mathcal{O}_{\widehat{X}}$ (see \cite[Prop.~2.20]{Dai25} for the definition of an extension of orbibundles). The goal of this section is to show:

\begin{prop}
    With notation as above, the orbibundle $\widehat{\mathcal{E}}$ is semistable with respect to $\pi^*c_1(X, \Delta)$.
\end{prop}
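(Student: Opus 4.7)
The plan is to apply the same strategy as for $\mathcal{T}_{(\widehat{X}, \widehat{\Delta})}$, with $\widehat{\mathcal{E}}$ in place of the orbifold tangent bundle. The new ingredient is equipping $\widehat{\mathcal{E}}$ with an orbifold hermitian metric $\widehat{h}_{t, \varepsilon}$ whose Chern curvature satisfies an approximate Hermite--Einstein identity analogous to \eqref{Perturbed Ric}. To construct such a metric, I would first fix a $\mathcal{C}^\infty$-orbifold splitting $\widehat{\mathcal{E}} \simeq \mathcal{O}_{\widehat{X}} \oplus \mathcal{T}_{(\widehat{X}, \widehat{\Delta})}$ compatible with $\omega_{t, \varepsilon}$, coming from a choice of hermitian metric on the rank-one orbibundle $\mathcal{L}$ underlying the extension cocycle $\omega_{0, \alpha\beta}$. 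In such a splitting, the extension class $\pi^*c_1(X, \Delta)$ is represented by a form built from $\omega_{t, \varepsilon}$ together with a $\ddc(\psi_\varepsilon - \varphi_{t, \varepsilon})$-correction coming from the difference between the reference metric $h^m$ and the one adapted to $\omega_{t, \varepsilon}$. The metric $\widehat{h}_{t, \varepsilon}$ is then the direct sum of the trivial metric on $\mathcal{O}_{\widehat{X}}$ and $h_{t, \varepsilon}$ on $\mathcal{T}_{(\widehat{X}, \widehat{\Delta})}$ in this splitting.

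Next, a direct computation of the Chern curvature in the splitting gives a block form whose diagonal entries are $-\textbf{i}\tau \wedge \overline{\tau}^*$ on the $\mathcal{O}_{\widehat{X}}$-part and $\textbf{i}\Theta(\mathcal{T}_{(\widehat{X}, \widehat{\Delta})}, h_{t, \varepsilon}) - \textbf{i}\overline{\tau}^* \wedge \tau$ on the $\mathcal{T}_{(\widehat{X}, \widehat{\Delta})}$-part, where $\tau = \tau_{t, \varepsilon}$ is the extension $(1,0)$-form, while the off-diagonal entries involve $\bar{\partial}\tau$. After tracing over $\End(\widehat{\mathcal{E}})$ and applying the musical isomorphism, the contributions of $\tau \wedge \overline{\tau}^*$ cancel between the two diagonal blocks and $\bar{\partial}\tau$ adds precisely one extra copy of $\omega_{t, \varepsilon}$ to the Ricci trace coming from $\mathcal{T}_{(\widehat{X}, \widehat{\Delta})}$. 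Combined with \eqref{Perturbed Ric}, this yields the sought approximate Hermite--Einstein identity for $\widehat{\mathcal{E}}$, with the same structure as the Ricci identity used in the previous proposition but with the rank $n+1$ of $\widehat{\mathcal{E}}$ in place of $n$.

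With this identity, Lemma \ref{Kobayashi orbifolde slope} adapts verbatim to $\widehat{\mathcal{E}}$, and for any saturated suborbisheaf $\mathcal{F} \subset \widehat{\mathcal{E}}$ of rank $r$ the slope $\mu_{\omega_{t, \varepsilon}}(\mathcal{F})$ admits a five-term decomposition with the same structure as \ref{Slope of the subsheaf}. The analysis of Section \ref{Analysis of different terms} then applies word-for-word: the main term converges to $\mu_{\pi^*c_1(X, \Delta)}(\widehat{\mathcal{E}})$, the $t$-term is non-negative, terms (III) and (IV) vanish in the limit by the cutoff estimates and Lemma \ref{Bound near the divisor}, and the second fundamental form term is non-positive. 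Passing to the limit as $t, \varepsilon \to 0$ yields $\mu_{\pi^*c_1(X, \Delta)}(\mathcal{F}) \leq \mu_{\pi^*c_1(X, \Delta)}(\widehat{\mathcal{E}})$, which gives semistability.

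The main obstacle is the construction of the adapted metric on $\mathcal{L}$ (equivalently, the splitting of $\widehat{\mathcal{E}}$) so that the trace identity for $\textbf{i}\Theta(\widehat{\mathcal{E}}, \widehat{h}_{t, \varepsilon})$ reduces the slope analysis to estimates already carried out in Section \ref{Analysis of different terms}. The subtlety is that $\omega_{t, \varepsilon}$ only represents $\pi^*c_1(X, \Delta)$ cohomologically, so the corrections $\ddc(\psi_\varepsilon - \varphi_{t, \varepsilon})$ and $-t\widehat{\omega}$ appear unavoidably in the curvature; these are however precisely the terms of \eqref{Perturbed Ric} that have already been shown to be either non-negative or asymptotically negligible, so no new estimate should be needed beyond what is already in the paper.
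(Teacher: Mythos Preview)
There is a genuine gap in your construction of the metric on $\widehat{\mathcal{E}}$. The extension $\widehat{\mathcal{E}}$ is defined by the class $\pi^*c_1(X,\Delta) = [\pi^*\omega]$, so any $\mathcal{C}^\infty$ splitting produces a second fundamental form $\tau$ representing that class. But the approximate Hermite--Einstein identity you aim for rests on the pointwise identities $\textbf{i}\tau^* \wedge \tau \wedge \omega_{t,\varepsilon}^{n-1} = \frac{1}{n}\omega_{t,\varepsilon}^n \otimes \id$ and $\textbf{i}\tau \wedge \tau^* \wedge \omega_{t,\varepsilon}^{n-1} = -\omega_{t,\varepsilon}^n$, which hold only when $\tau$ is (a scalar multiple of) $\omega_{t,\varepsilon}$ itself. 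Since $\omega_{t,\varepsilon} \in [\pi^*\omega + t\widehat{\omega}] \neq [\pi^*\omega]$ for $t > 0$, it cannot be the extension form of $\widehat{\mathcal{E}}$; if you instead force $\tau = \omega_{t,\varepsilon} - t\widehat{\omega} + \ddc(\cdots)$ into the correct class, the quadratic expression $\tau \wedge \tau^*$ produces cross-terms that are not controlled by any estimate in Section~\ref{Analysis of different terms}. Your curvature description is also inaccurate: the off-diagonal blocks $d \circ \beta + \beta \circ D$ vanish by the K\"ahler condition and contribute nothing, while the diagonal $\beta$-terms do not cancel but are balanced by a rescaling of $\beta$ (with factor $\sqrt{\xi}$, $\xi = \frac{1}{n+1}$) so that both diagonal blocks become $\frac{1}{n+1}\omega_{t,\varepsilon}^n$.

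The paper resolves the class mismatch by a deformation argument: it introduces a family of extensions $\mathcal{V}_t$ of $\mathcal{T}_{(\widehat{X},\widehat{\Delta})}$ by $\mathcal{O}_{\widehat{X}}$ defined by the class $\frac{1}{1+t}[\pi^*\omega + t\widehat{\omega}]$, so that $\mathcal{V}_0 = \widehat{\mathcal{E}}$ and the natural second fundamental form of $\mathcal{V}_t$ is exactly $\frac{1}{1+t}\omega_{t,\varepsilon}$, making the curvature computation clean on $\mathcal{V}_t$. The remaining nontrivial step, absent from your outline, is to transfer the subsheaf: given a saturated $\mathcal{F} \subset \widehat{\mathcal{E}}$, an explicit cocycle manipulation yields an injection $\mathcal{F} \hookrightarrow \mathcal{V}_t \otimes \mathcal{O}(E)^\orb$ for small $t$, the twist by $\mathcal{O}(E)$ absorbing the poles of the logarithmic differentials $\frac{\mathrm{d}B_{i,\alpha}}{B_{i,\alpha}}$ along the exceptional divisor. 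After saturating in $\mathcal{V}_t(E)$ and observing that the extra term $\textbf{i}\Theta(E,h_E)\wedge\omega_{t,\varepsilon}^{n-1}$ is cohomologically negligible as $t\to 0$, the estimates you cite do finish the proof; but this deformation-plus-embedding is the missing idea.
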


We use the same deformation arguments as in \cite[Proof of Thm.~11]{DGP24}. We consider the $(1,1)$-form $\frac{1}{1+t}(\pi^*\omega + t \widehat{\omega})$ that induces an orbifold $(1,1)$-form. Let us define a family of extensions $\mathcal{V}_t$ induced by these classes (for $t = 0$, we have $\mathcal{V}_0 = \widehat{\mathcal{E}}$). In this case, the deformation has a special form, since the class itself is obtained as a deformation of $\pi^*c_1(X, \Delta)$ by the cocycles defining $\mathcal{O}(E_i)$. Indeed, we have:
$$
\pi^*\omega + t\widehat{\omega} = (1+t)\pi^* \omega -t\sum_i \varepsilon_i \textbf{i}\Theta(E_i, h_i).
$$

We first assume that the smooth orbi-\'etale orbistructure provides a covering of $\widehat{X}$ by sufficiently small open subsets so that we can consider a family of local meromorphic functions $(b_{i,\alpha})$ that describes the Cartier divisor $E_i$. We denote by $a_{i, \alpha\beta} := \frac{b_{i,\alpha}}{b_{i, \beta}}$ the cocycles of the associated line bundle.
Hence by pulling back each local functions through local uniformizations, the transition maps of the induced orbifold line bundle $\mathcal{O}(E_i)^\orb$ on intersections of local uniformizations are denoted $A_{i, \alpha\beta} := g_{\alpha\beta}^*\eta_\alpha^*a_{i, \alpha\beta}$, and we have on $V_{\alpha\beta}$:
$$
A_{i, \alpha\beta} = \frac{g_{\alpha\beta}^* B_{i,\alpha}}{g_{\beta\alpha}^* B_{i, \beta}},
$$
where $B_{i, \alpha} := \eta_\alpha^*b_{i, \alpha}$. Hence, by differentiating its logarithm, we have the following equality of sections:
$$
\frac{\mathrm{d}A_{i, \alpha\beta}}{A_{i, \alpha\beta}} =  g_{\alpha\beta}^*\frac{\mathrm{d} B_{i,\alpha}}{B_{i,\alpha}}  \circ \psi_{\alpha\beta} - \varphi_{\alpha\beta} \circ g_{\beta\alpha}^*\frac{\mathrm{d}B_{i, \beta}}{B_{i,\beta}}.
$$

For $c = (c_1, \cdots, c_k) \in \C^k$, we can consider the following cocycles:
$$
\omega_{c, \alpha\beta} := \omega_{0, \alpha\beta} - \sum_{1 \leq i \leq k}  c_i \frac{\mathrm{d}A_{i, \alpha\beta}}{A_{i, \alpha\beta}},
$$
where $\omega_{0, \alpha\beta}$ are the holomorphic 1-forms induced by $-\pi^*(K_X + \Delta)$ defined previously. By taking $c(t) := \frac{t}{1+t}(\varepsilon_1, \cdots, \varepsilon_k)$, we get the sought extension $\mathcal{V}_t$.

\subsubsection{Inclusion of suborbisheaf}\label{Inclusion of suborbisheaf}

Let $\mathcal{F} \subseteq \widehat{\mathcal{E}}$ be a suborbisheaf, whose transition maps of orbisheaves are denoted $(\Sigma_{\alpha\beta} : g_{\beta\alpha}^* \mathcal{F}_\beta \longrightarrow g_{\alpha\beta}^* \mathcal{F}_\alpha)$. Let us show that for any $t$ sufficiently small, there is an inclusion of orbisheaf
$$
\mathcal{F} \subseteq \mathcal{V}_t(E) := \mathcal{V}_t \otimes \mathcal{O}(E)^\orb,
$$
where $E = \sum_i E_i$ is the reduced exceptional divisor. For each local uniformizing chart, there is an inclusion  of sheaves $\mathcal{F}_\alpha \hookrightarrow \mathcal{E}_\alpha = \mathcal{O}_{V_\alpha} \oplus \mathcal{T}_{V_\alpha}$. This can be understood through two morphisms
\begin{align*}
    p_\alpha & : \mathcal{F}_\alpha \longrightarrow \mathcal{O}_{V_\alpha} \\
    q_\alpha & : \mathcal{F}_\alpha \longrightarrow \mathcal{T}_{V_\alpha}.  
\end{align*}

These morphisms satisfy compatibility conditions on intersections of the orbistructure:
\begin{align*}
    g_{\alpha\beta}^* p_\alpha \circ \Sigma_{\alpha\beta} & = \varphi_{\alpha\beta} \circ g_{\beta\alpha}^* p_\beta + \omega_{0, \alpha\beta} \circ g_{\beta\alpha}^* q_\beta \\
    g_{\alpha\beta}^* q_\alpha \circ \Sigma_{\alpha\beta} & = \psi_{\alpha\beta} \circ g_{\beta\alpha}^* q_\beta.
\end{align*}

We define for $t \neq -1$:
\begin{align*}
    p_{t, \alpha} & = p_\alpha + \frac{t}{1+t} \sum_i \varepsilon_i \frac{\mathrm{d}B_{i, \alpha}}{B_{i, \alpha}} \circ q_\alpha \\
    q_{t, \alpha} & = q_\alpha
\end{align*}

One can check that we have compatibility conditions on intersections on local uniformizations:
\begin{align*}
    g_{\alpha\beta}^* p_{t,\alpha} \circ \Sigma_{\alpha\beta} & = \varphi_{\alpha\beta} \circ g_{\beta\alpha}^* p_{t, \beta} + \omega_{c(t), \alpha\beta} \circ g_{\beta\alpha}^* q_{t, \beta} \\
    g_{\alpha\beta}^* q_{t, \alpha} \circ \Sigma_{\alpha\beta} & = \psi_{\alpha\beta} \circ g_{\beta\alpha}^* q_{t, \beta}.
\end{align*}

Hence, the collection $(p_{t, \alpha}, q_{t, \alpha})$ induces a morphism of orbisheaves $\mathcal{F} \to \mathcal{V}_t(E)$. Moreover, this morphism $(p_{t, \alpha}, q_{t, \alpha})$ remains injective for any $t$ in a neighborhood of $0$ and any $\alpha$.

\subsubsection{Curvature of the deformation}

We endow the orbibundle with the Chern connection $\nabla_{t, \varepsilon}$ of the metric $h_{\mathcal{V}_t, \varepsilon} = h_{\mathcal{O}_{\widehat{X}}} \oplus h_{t, \varepsilon}$. 
Since the metrics $\omega_{t, \varepsilon}$ are orbifold, they induce on each local uniformization $\eta_\alpha : V_\alpha \to U_\alpha$ a section $\omega_{t, \varepsilon, \alpha} \in \mathcal{C}^\infty_{(0,1)}(V_\alpha, \mathcal{T}_{V_\alpha}^\vee)$. If we denote by $\beta_{t, \varepsilon, \alpha} := \frac{1}{1+t}\omega_{t, \varepsilon, \alpha}$, the connection $\nabla_{t, \varepsilon}$ can be decomposed in the following way with respect to the splitting $\mathcal{O}_{\widehat{X}} \oplus \mathcal{T}_{(\widehat{X}, \widehat{\Delta})}$:
$$
\nabla_{t, \varepsilon, \alpha} =
\begin{bmatrix}
    d_\alpha & \beta_{t, \varepsilon,\alpha} \\
    - \beta_{t, \varepsilon,\alpha}^* & D_{t, \varepsilon, \alpha}
\end{bmatrix},
$$
where $(d_\alpha)$ and $(D_{t, \varepsilon, \alpha})$ are the orbifold Chern connections associated to the trivial metric $h_{\mathcal{O}_{\widehat{X}}}$ and $h_{t, \varepsilon}$ respectively.\\

From now, we remove the dependence on $\alpha$ in order to keep notation light, but all the following computations are done in a local uniformizing chart. The curvature tensor associated to this connection is:
\begin{equation}
    \Theta(\mathcal{V}_t, \nabla_{t, \varepsilon}) =
\begin{bmatrix}
    - \beta_{t, \varepsilon } \wedge \beta_{t, \varepsilon }^* & d  \circ \beta_{t, \varepsilon } + \beta_{t, \varepsilon } \circ D  \\
    - \beta_{t, \varepsilon }^* \circ d  - D  \circ \beta_{t, \varepsilon }^* & \Theta(\mathcal{T}_{V }, h_{t, \varepsilon}) - \beta_{t, \varepsilon }^* \wedge \beta_{t, \varepsilon }
\end{bmatrix}
\end{equation}

Let us compute the terms appearing in this tensor.
First, we have in coordinates:
\begin{equation}\label{Beta local}
    \beta_{t, \varepsilon} = \frac{1}{1+t} \sum_{p,q} \omega_{p\overline{q}}\; \mathrm{d}\overline{z_q} \otimes \left( \frac{\partial}{\partial z^p} \right)^\vee.
\end{equation}
Thanks to the following identity for any section $s \in H^0(V, \mathcal{O}_V)$ and any $t \in H^0(V, \mathcal{T}_V)$:
$$
\left\langle s,  \beta_{t, \varepsilon} t \right\rangle_{\mathcal{O}_V} - \left\langle \beta_{t, \varepsilon}^* s ,  t \right\rangle_{\mathcal{T}_V} = 0,
$$
we deduce that:
\begin{equation}\label{Transconjugue beta local}
    \beta_{t, \varepsilon}^* = \frac{1}{1+t} \sum_{p} \mathrm{d}z_p \otimes \frac{\partial}{\partial z^p}.
\end{equation}

By using the fact that $\omega_{t, \varepsilon}$ is K\"ahler, we have:
\begin{equation}
    \overline{\partial} \circ \beta_{t, \varepsilon} + \beta_{t, \varepsilon} \circ D'' = 0, \qquad
    \partial \circ \beta_{t, \varepsilon} + \beta_{t, \varepsilon} \circ D' = 0.
\end{equation} 

Moreover, we have thanks to \eqref{Beta local} and \eqref{Transconjugue beta local}:
\begin{equation}
    \textbf{i} (1+t)^2 \beta_{t, \varepsilon} \wedge \beta_{t, \varepsilon}^* \wedge \omega_{t, \varepsilon}^{n-1} = -\omega_{t, \varepsilon}^{n},
\end{equation}
\begin{equation}
    \textbf{i} (1+t)^2 \beta_{t, \varepsilon}^* \wedge \beta_{t, \varepsilon} \wedge \omega_{t, \varepsilon}^{n-1} = \frac{1}{n}\omega_{t, \varepsilon}^{n} \otimes \id_{\mathcal{T}_{V}}.
\end{equation}

By replacing $\beta_{t, \varepsilon}$ by $(1+t)\sqrt{\xi}\beta_{t, \varepsilon}$ where $\xi$ is a positive real number (it does not change the isomorphism class of the extension), we get:
\begin{equation}
    \textbf{i}\Theta(\mathcal{V}_t, \nabla_{t, \varepsilon}) \wedge \omega_{t, \varepsilon}^{n-1} = 
    \begin{bmatrix}
        \xi \omega_{t, \varepsilon}^n & 0 \\
        0 & \textbf{i}\Theta(\mathcal{T}_V, h_{t, \varepsilon}) \wedge \omega_{t, \varepsilon}^{n-1} - \frac{\xi}{n} \omega_{t, \varepsilon}^{n} \otimes \id_{\mathcal{T}_V}
    \end{bmatrix}.
\end{equation}
Hence, by choosing $\xi = \frac{1}{n+1}$ (so that $\xi = \frac{1 - \xi}{n}$), we get thanks to \eqref{Perturbed Ric}:
\begin{equation}
    \textbf{i}\Theta(\mathcal{V}_t, \nabla_{t, \varepsilon}) \wedge \omega_{t, \varepsilon}^{n-1} = \frac{1}{n+1} \omega_{t, \varepsilon}^n I_{n+1} + A_{t, \varepsilon},
\end{equation}
where:
$$ 
A_{t, \varepsilon} :=
\begin{bmatrix}
    0 & 0 \\
    0 & \sharp \ddc (\psi_\varepsilon - \varphi_{t, \varepsilon}) - t\sharp \widehat{\omega} - \sum_i a_i \sharp \theta_{i, \varepsilon}
\end{bmatrix}
\frac{1}{n}\omega_{t, \varepsilon}^n.
$$

We can endow $\mathcal{V}_t(E)$ with the metric $h_{\mathcal{V}_t(E), \varepsilon} := h_{\mathcal{V}_t, \varepsilon} \otimes h_E$ where $h_E$ is a metric on $\mathcal{O}(E)$. Then, the curvature tensor associated to this metric is:
\begin{equation}
    \textbf{i}\Theta(\mathcal{V}_t(E)) \wedge \omega_{t, \varepsilon}^{n-1} = \frac{1}{n+1} \omega_{t, \varepsilon}^n I_{n+1} + A_{t, \varepsilon} + \textbf{i}\Theta(E, h_E) \wedge \omega_{t, \varepsilon}^{n-1} I_{n+1}.
\end{equation}

\subsubsection{Semistability of the extension}\label{Semistability of the extension}

If $\mathcal{F}$ is a saturated suborbisheaf of $\widehat{\mathcal{E}}$, then there is an inclusion of orbisheaves $\mathcal{F} \hookrightarrow \mathcal{V}_t(E)$. Since $\mathcal{F}$ could be a subsheaf that is not saturated in $\mathcal{V}_t(E)$, we consider $\mathcal{F}_t$ its saturation. Hence, there exists an effective divisor $D_t$ such that $\det \mathcal{F} = \det \mathcal{F}_t \otimes \mathcal{O}(-D_t)$, and there exists an orbisubbundle $F_t$ of $\mathcal{V}_t(E)$ such that $\mathcal{F}_t$ is the sheaf of sections of this orbibundle $\mathcal{O}(F_t)$ on a big open subset $W_t$. The same arguments as those developed to prove Lemma~\ref{Kobayashi orbifolde slope} show that we still have:
$$
\mu_{\omega_{t, \varepsilon}}(\mathcal{F}_t) = \frac{1}{r} \int_{W_t} c_1^\orb(\mathcal{F}_t, h_{\mathcal{V}_t(E), \varepsilon}) \wedge \omega_{t, \varepsilon}^{n-1}.
$$

Hence, one can observe that:
$$
\lim_{t, \varepsilon \rightarrow 0} \int_{W_t} \tr_{\End} \left( \mathrm{pr}_{F_t} {A_{t, \varepsilon}}_{|F_t} \right) = 0.
$$
Indeed, if $A$ is a hermitian semi-positive endomorphism, the endomorphism
$
\begin{bmatrix}
    0 & 0 \\
    0 & A
\end{bmatrix}
$
remains hermitian and semi-positive, so that the different bounds given in Section~\ref{Analysis of different terms} still guarantee that:
\begin{itemize}
    \item $\displaystyle \int_{W_t} \tr_\End \left( \mathrm{pr}_{F_t}
    \begin{bmatrix}
    0 & 0 \\
    0 & \sharp \ddc (\psi_\varepsilon - \varphi_{t, \varepsilon})
\end{bmatrix}_{|F_t}
\right) \omega_{t, \varepsilon}^n \underset{t, \varepsilon \to 0}{\longrightarrow} 0$,
    \item $\displaystyle \int_{W_t} \tr_\End \left( \mathrm{pr}_{F_t}
    \begin{bmatrix}
    0 & 0 \\
    0 & \sum_i a_i \sharp \theta_{i, \varepsilon}
\end{bmatrix}_{|F_t}
\right) \omega_{t, \varepsilon}^n \underset{t, \varepsilon \to 0}{\longrightarrow} 0$,
    \item $\displaystyle \int_{W_t} \tr_\End \left( \mathrm{pr}_{F_t}
    \begin{bmatrix}
    0 & 0 \\
    0 & t \sharp \widehat{\omega}
\end{bmatrix}_{|F_t}
\right) \omega_{t, \varepsilon}^n \underset{t, \varepsilon \to 0}{\longrightarrow} 0$.
\end{itemize}

Moreover, we have:
$$
\int_{\widehat{X}} c_1(E)^\orb \wedge \omega_{t, \varepsilon}^{n-1} = \left[ E \right] \cdot \left[ \pi^* \omega + t\widehat{\omega} \right] \underset{t, \varepsilon \to 0}{\longrightarrow} 0.
$$

We get that:
\begin{align*}
    \mu_{\omega_{t, \varepsilon}}(\mathcal{F}) & = \mu_{\omega_{t, \varepsilon}}(\mathcal{F}_t) - \frac{1}{r}\int_{\widehat{X}} c_1(D_t) \wedge \omega_{t, \varepsilon}^{n-1} \\
    & \leq \frac{1}{r} \int_{W_t} \tr_\End( \mathrm{pr}_{F_t}(\textbf{i}\Theta(\mathcal{V}_t(E), h_{\mathcal{V}_t(E), \varepsilon})_{|F_t}) \wedge \omega_{t, \varepsilon}^{n-1} \\
    & = \frac{1}{n+1} \int_{\widehat{X}}\omega_{t, \varepsilon}^n + \frac{1}{r} \int_{\widehat{X}} \tr_{\End} \left( \mathrm{pr}_{F_t} {A_{t, \varepsilon}}_{|F_t} \right) + \int_{\widehat{X}} c_1(E)^\orb \wedge \omega_{t, \varepsilon}^{n-1} \\
    & \underset{t, \varepsilon \longrightarrow 0}{\longrightarrow} \frac{1}{n+1} \left[ \pi^* \omega \right]^n = \mu_{\pi^*c_1(X, \Delta)}(\widehat{\mathcal{E}}).
\end{align*}

We deduce that $\widehat{\mathcal{E}}$ is semistable with respect to $\pi^*c_1(X, \Delta)$.

\section{Proof of Theorem A}

\subsection{Reduction to a log resolution}

In order to study the semistability of adapted sheaves $\mathcal{T}_{X, \Delta, f}$ and $\mathcal{E}_{X, \Delta, f}$, we reduce the problem to the log resolution introduced in previous sections. Let us first recall the definition of strictly adapted morphisms and adapted sheaves (see \cite[Sect.~2.1.1 and Sect.~3.2.1]{Dai25} for details).

\begin{defi}
    Let $X, Y$ be normal complex spaces and $\Delta := \sum_i \left( 1 - \frac{1}{m_i} \right) \Delta_i $ be a $\Q$-Weil divisor on $X$ with standard coefficients. A morphism $f : Y \to X$ is \emph{strictly $\Delta$-adapted} if it is surjective, finite, Galois and for any $i$, we have $f^*\Delta_i = m_i \Delta'_i$ where $\Delta'_i$ is a reduced divisor on $Y$.
\end{defi}

\begin{defi}
    Let $(X, \Delta)$ be a klt pair. Then, thanks to \cite[Lemma.~14]{CGG24}, there exists $X^\orb$ an open subset whose complement has codimension at least 3 such that $(X^\orb, \Delta_{|X^\orb})$ can be endowed with a smooth orbi-\'etale orbistructure. Let $f : Y \to X$ a strictly $\Delta$-adapted morphism. By considering normalizations of the fiber products $Y \underset{X}{\times} V_\alpha$, we can construct an orbi-\'etale orbistructure above a big open subset $Y^\circ \subset Y$. If $\mathcal{F}$ is an orbisheaf on $X^\orb$, one can define the pullback orbisheaf $f^* \mathcal{F}$. We define the \emph{adapted sheaf associated to $\mathcal{F}$}, denoted $\mathcal{F}_{X, \Delta, f}$ as the sheaf of invariant sections of $f^*\mathcal{F}$. It is a reflexive sheaf on $Y$.\\
\end{defi}

First, we consider a Kawamata covering $f_1 : Y_1 \to \widehat{X}$ that branches exactly at order $m_i$ over $\widehat{\Delta}_i$ thanks to \cite[Prop.~4.1.12]{Laz04}. The variety $Y_1$ is smooth. Let $(D_i)_i$ be the family of irreducible components of the branching divisor of $f_1$ that are not contained in $\widehat{\Delta}$. We have $f_1^*D_i = \sum_j r_{ij} D'_{ij}$. We set $N_i = \mathrm{lcm}\,(r_{ij})$, so that $f_1$ is a covering branched at most along $\widehat{\Delta} + \sum_i \left( 1 - \frac{1}{N_i} \right) D_i$ in the sense of \cite[Def.~22]{CGG24}. Then, we can consider its Galois closure thanks to \cite[Cor.~27]{CGG24}, let us denote it by $\widehat{f} : \widehat{Y} \to \widehat{X}$. It is a strictly $\widehat{\Delta}$-adapted morphism. Moreover, the morphism $\widehat{Y} \to Y_1$ is quasi-\'etale, hence \'etale because of smoothness of $Y_1$ and purity of the branch locus. Then, $\widehat{Y}$ is smooth.\\

Now, we consider the adapted sheaves associated to the orbibundles previously constructed on $(\widehat{X}, \widehat{\Delta})$: $\mathcal{T}_{\widehat{X}, \widehat{\Delta}, \widehat{f}}, \Omega^1_{\widehat{X}, \widehat{\Delta}, \widehat{f}}, \widehat{\mathcal{E}}_{\widehat{X}, \widehat{\Delta}, \widehat{f}}, \mathcal{V}_t(E)_{\widehat{X}, \widehat{\Delta}, \widehat{f}}$.
These reflexive sheaves are actually locally free, since the local uniformizations above $\widehat{Y}$ are \'etale\footnote{We have even a better description: if $\eta_\alpha : V_\alpha \to U_\alpha$ is a local uniformization on $\widehat{X}$, and we consider $\widehat{Y}_\alpha$ a connected component of the preimage of $U_\alpha$ by $\widehat{f}$, then we have a factorization through $\eta_\alpha$: there exists $\widehat{f}_\alpha : \widehat{Y}_\alpha \to V_\alpha$ such that $f = \eta_\alpha \circ \widehat{f}_\alpha$. This is a consequence of the fact that the local uniformization above $\widehat{Y}$ induced by $\eta_\alpha$ is \'etale, hence it is a local isomorphism.}. Let us denote by:
$$
\xymatrix{
	\widehat{Y} \ar[r]^\nu \ar@/^2pc/[rr]^{\pi \circ \widehat{f}} & Y \ar[r]^f & X
}
$$

the Stein factorization of $\pi \circ \widehat{f}$. Then, we observe that:
\begin{itemize}
	\item the morphism $\nu$ is birational\footnote{It is a consequence of the fact that $\pi \circ \widehat{f}$ is generically finite. Moreover, the finiteness of $f$ ensures that $\nu(E')$ is analytic, with irreducible components of codimension at least 2.}, we denote $E'$ its exceptional locus.
	\item the morphism $f$ is Galois\footnote{Indeed, it is Galois away from the center of $\nu$, and the uniqueness of completion of finite morphisms ensures that we can extend the action through $Y$.} and strictly $\Delta$-adapted\footnote{It suffices to study the local forms of the morphisms around general points of $\Delta$.}.
\end{itemize}

We have the following lemma that is the sought reduction to a resolution.

\begin{lemm}\label{Reduction of semistability to the resolution}
    With notation as above:\\
    $(i)$ if $\mathcal{T}_{\widehat{X}, \widehat{\Delta}, \widehat{f}}$ is $\widehat{f}^*\pi^*c_1(X, \Delta)$-semistable, then $\mathcal{T}_{X, \Delta, f}$ is $f^*c_1(X, \Delta)$-semistable,\\
    $(ii)$ if $\widehat{\mathcal{E}}_{\widehat{X}, \widehat{\Delta}, \widehat{f}}$ is $\widehat{f}^*\pi^*c_1(X, \Delta)$-semistable, then $\mathcal{E}_{X, \Delta, f}$ is $f^*c_1(X, \Delta)$-semistable.
\end{lemm}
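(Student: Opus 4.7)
The plan is to transfer any putative destabilizing subsheaf on $Y$ up to $\widehat{Y}$ via $\nu$, and then use the projection formula to show that slopes are preserved, so that the hypothesized semistability upstairs passes down to $Y$.

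For part (i), let $\mathcal{G} \subseteq \mathcal{T}_{X, \Delta, f}$ be a saturated subsheaf of rank $r$. Denote $\widehat{Y}^\circ := \widehat{Y} \setminus E'$, the open set on which $\nu$ is an isomorphism; its image in $Y$ lies over $X \setminus \pi(|E|)$, where $\pi$ is also an isomorphism. On this open locus, the log pairs $(X, \Delta)$ and $(\widehat{X}, \widehat{\Delta})$ coincide as smooth orbi-\'etale pairs, and $f$ is identified with $\widehat{f}$ via $\nu$; consequently $\mathcal{T}_{X, \Delta, f}$ and $\mathcal{T}_{\widehat{X}, \widehat{\Delta}, \widehat{f}}$ agree through $\nu|_{\widehat{Y}^\circ}$. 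Transporting $\mathcal{G}$ through this identification yields an inclusion into $\mathcal{T}_{\widehat{X}, \widehat{\Delta}, \widehat{f}}|_{\widehat{Y}^\circ}$, and I take its saturation $\widehat{\mathcal{G}}$ inside $\mathcal{T}_{\widehat{X}, \widehat{\Delta}, \widehat{f}}$ on all of $\widehat{Y}$; it is saturated of rank $r$.

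The key observation for comparing slopes is that $\widehat{f}^*\pi^* c_1(X,\Delta) = \nu^* f^* c_1(X,\Delta)$, so for any $\nu$-exceptional divisor $D$ on $\widehat{Y}$ the projection formula gives
\[
D \cdot \bigl(\nu^* f^* c_1(X,\Delta)\bigr)^{n-1} \;=\; \nu_* D \cdot \bigl(f^* c_1(X,\Delta)\bigr)^{n-1} \;=\; 0.
\]
Since $\det \widehat{\mathcal{G}}$ and $\nu^{[*]}\det \mathcal{G}$ agree away from $E'$, they differ by a $\nu$-exceptional divisor, whence
\[
\mu_{\widehat{f}^*\pi^* c_1(X,\Delta)}\bigl(\widehat{\mathcal{G}}\bigr) \;=\; \mu_{f^* c_1(X,\Delta)}(\mathcal{G}),
\]
and applying the same reasoning to the full sheaves yields $\mu_{\widehat{f}^*\pi^* c_1(X,\Delta)}(\mathcal{T}_{\widehat{X}, \widehat{\Delta}, \widehat{f}}) = \mu_{f^* c_1(X,\Delta)}(\mathcal{T}_{X,\Delta,f})$. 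The semistability hypothesis on $\mathcal{T}_{\widehat{X}, \widehat{\Delta}, \widehat{f}}$ then translates into $\mu(\mathcal{G}) \leq \mu(\mathcal{T}_{X,\Delta,f})$.

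Part (ii) follows an identical scheme: over $\widehat{Y}^\circ$ the extension $\widehat{\mathcal{E}}_{\widehat{X}, \widehat{\Delta}, \widehat{f}}$ is, through $\nu$, canonically the restriction of $\mathcal{E}_{X, \Delta, f}$, since both extensions are built from the same Cartier class $-m(K_X+\Delta)$ and its pullback. A saturated subsheaf of $\mathcal{E}_{X, \Delta, f}$ therefore lifts to a saturated subsheaf of $\widehat{\mathcal{E}}_{\widehat{X}, \widehat{\Delta}, \widehat{f}}$ of equal slope, and the same projection-formula computation concludes. The step requiring most care is verifying cleanly that the adapted sheaves and their extensions on $(\widehat{X}, \widehat{\Delta})$ and $(X, \Delta)$ genuinely coincide on the big open where $\pi$ is an isomorphism, including the compatibility of the Galois coverings through the Stein factorization $\pi\circ\widehat{f}= f\circ\nu$; once that identification is pinned down at the level of the orbifold data, everything else reduces to bookkeeping with $\nu$-exceptional divisors.
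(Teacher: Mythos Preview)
Your argument is correct and matches the paper's own proof essentially line for line: both identify the adapted sheaves over $\widehat{Y}\setminus E'$ via $\nu$ (using $\pi\circ\widehat{f}=f\circ\nu$), lift a subsheaf from $Y$ to its saturation in the sheaf on $\widehat{Y}$, and then invoke the projection formula to see that $\nu$-exceptional contributions vanish against $(\nu^*f^*c_1(X,\Delta))^{n-1}$, so slopes are preserved. The paper phrases the slope comparison slightly more tersely (just ``by the projection formula''), but your explicit unwinding via the difference of determinants being $\nu$-exceptional is exactly what underlies it.
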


\begin{proof}

We first prove $(i)$. By pulling back the orbifold tangent bundle above the orbifold locus of $(X, \Delta)$ through $\pi \circ \widehat{f} = f \circ \nu $, we have the following isomorphisms:
$$
(\nu^{*}\mathcal{T}_{X, \Delta, f})_{|\widehat{Y} \backslash E'} \simeq (\mathcal{T}_{\widehat{X}, \widehat{\Delta}, \widehat{f}})_{|\widehat{Y} \backslash E'}.
$$

We consider $\mathcal{F} \subseteq \mathcal{T}_{X, \Delta, f}$ a proper subsheaf. Let us denote $\widehat{\mathcal{F}}$ the saturation of $(\nu^*\mathcal{F})_{|\widehat{Y} \backslash E'}$ in $\mathcal{T}_{\widehat{X}, \widehat{\Delta}, \widehat{f}}$. Thanks to the projection formula, we have the following equalities of slopes:
$$
\mu_{\nu^*f^*c_1(X, \Delta)}(\widehat{\mathcal{F}}) = \mu_{f^*c_1(X, \Delta)}(\mathcal{F}),
$$
$$
\mu_{\nu^*f^*c_1(X, \Delta)}(\mathcal{T}_{\widehat{X}, \widehat{\Delta}, \widehat{f}}) = \mu_{f^*c_1(X, \Delta)}(\mathcal{T}_{X, \Delta, f}).
$$
From these equalities we deduce the semistability of $\mathcal{T}_{X, \Delta, f}$.

We now prove $(ii)$. We use the same arguments as in $(i)$. In order to do this, it suffices to observe that the projection formula applied to $\widehat{\mathcal{E}}_{\widehat{X}, \widehat{\Delta}, \widehat{f}}$ computes the slope of $\mathcal{E}_{X, \Delta, f}$. We observe that we have an equality of classes in $H^1(\widehat{Y} \backslash E', \Omega^{[1]}_{\widehat{X}, \widehat{\Delta}, \widehat{f}})$
$$
\nu^*f^*(-(K_X + \Delta)) = \widehat{f}^*\pi^*(-(K_X + \Delta)),
$$
we deduce:
$$
\left(\nu^{*}\mathcal{E}_{X, \Delta, f}\right)_{|\widehat{Y} \backslash E'} \simeq (\widehat{\mathcal{E}}_{\widehat{X}, \widehat{\Delta}, \widehat{f}})_{|\widehat{Y} \backslash E'}.
$$
This enables us to compute slopes by projection formula:
$$
\mu_{\nu^*f^*c_1(X, \Delta)}(\widehat{\mathcal{E}}_{\widehat{X}, \widehat{\Delta}, \widehat{f}}) = \mu_{f^*c_1(X, \Delta)}(\mathcal{E}_{X, \Delta, f}).
$$
We conclude the semistability of $\mathcal{E}_{X, \Delta, f}$ similarly to $(i)$.
\end{proof}

The following result shows that the semistability property of adapted sheaves on one strictly $\Delta$-adapted transmits to other such morphisms.

\begin{lemm}\textup{\cite[Prop.~3.11]{Dai25}}
	Let $f : Y \to X$ and $f' : Y' \to X$ two strictly $\Delta$-adapted morphisms, and let $\mathcal{F}$ be an orbisheaf defined on $(X, \Delta)_\orb$. Then, $\mathcal{F}_{X, \Delta, f}$ is $f^*c_1(X, \Delta)$-semistable if and only if $\mathcal{F}_{X, \Delta, f'}$ is $f'^*c_1(X, \Delta)$-semistable. 
\end{lemm}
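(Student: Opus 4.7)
The plan is to compare semistability on $Y$ and $Y'$ by producing a common strictly $\Delta$-adapted cover that dominates both, and then to appeal to the fact that semistability descends along quasi-\'etale Galois covers of normal varieties.

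First I would construct such a common refinement. Let $Y''_0$ be an irreducible component of the normalization of $Y \underset{X}{\times} Y'$. Locally around a general point of $\Delta_i$, the morphisms $f$ and $f'$ are modeled by $z \mapsto z^{m_i}$ and $w \mapsto w^{m_i}$; the fiber product $\{z^{m_i} = w^{m_i}\}$ splits into $m_i$ smooth components of the form $\{w = \zeta z\}$ with $\zeta^{m_i} = 1$, each ramified at order exactly $m_i$ over $\Delta_i$, and with both projections to $Y$ and $Y'$ \'etale. Hence $Y''_0 \to X$ already has the correct ramification indices and the induced morphisms $Y''_0 \to Y$, $Y''_0 \to Y'$ are quasi-\'etale. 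Taking the Galois closure via \cite[Cor.~27]{CGG24} produces a strictly $\Delta$-adapted morphism $f'' : Y'' \to X$ together with quasi-\'etale Galois morphisms $p : Y'' \to Y$ and $p' : Y'' \to Y'$ satisfying $f \circ p = f'' = f' \circ p'$.

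Next, for any orbisheaf $\mathcal{F}$ on $(X, \Delta)_\orb$, the quasi-\'etale character of $p$ yields a natural identification
\[
\mathcal{F}_{X, \Delta, f''} \simeq p^{[*]} \mathcal{F}_{X, \Delta, f},
\]
where $p^{[*]}$ denotes the reflexive pullback, and symmetrically for $p'$. Indeed, on the orbi-\'etale atlas the formation of the invariants functor that defines the adapted sheaves commutes with \'etale base change, and both sides agree on the big open subset where $p$ is \'etale, hence everywhere by reflexivity.

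Finally I would invoke the descent principle: for a quasi-\'etale Galois cover $p : Z' \to Z$ between normal projective varieties and any polarization $\alpha$ on $Z$, a reflexive sheaf $\mathcal{G}$ is $\alpha$-semistable if and only if $p^{[*]}\mathcal{G}$ is $p^*\alpha$-semistable. The direct implication follows from the projection formula, which shows that pulling back a destabilizing subsheaf multiplies its slope by $\deg(p)$ and hence still destabilizes. For the converse, the maximal destabilizing subsheaf of $p^{[*]}\mathcal{G}$ is unique, so it is automatically $\mathrm{Gal}(p)$-invariant; it therefore descends (on the big open where $p$ is \'etale) to a destabilizing subsheaf of $\mathcal{G}$, whose reflexive hull destabilizes globally. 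Applying this symmetrically to $p$ and $p'$ produces the chain
\[
\mathcal{F}_{X, \Delta, f} \text{ semistable} \iff \mathcal{F}_{X, \Delta, f''} \text{ semistable} \iff \mathcal{F}_{X, \Delta, f'} \text{ semistable},
\]
which is the claim. The main obstacle is the first step: one must check that the irreducible components of $Y \underset{X}{\times} Y'$ carry exactly the right ramification indices over each $\Delta_i$ and that passing to the Galois closure does not introduce spurious ramification along $\Delta$; the local model above takes care of the first point, and the Galois closure being \'etale outside the branch locus takes care of the second.
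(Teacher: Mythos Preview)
The paper does not give its own proof of this lemma; it is cited verbatim from \cite[Prop.~3.11]{Dai25}, and the only hint at the method is the remark immediately after it that the polystable version ``goes similarly, using \cite[Lemma.~3.2.3]{HL10}'', i.e.\ descent of (poly)stability along finite covers. Your approach---a common strictly $\Delta$-adapted refinement via an irreducible component of the normalized fibre product and its Galois closure \cite[Cor.~27]{CGG24}, identification of the adapted sheaves with reflexive pullbacks along the resulting quasi-\'etale projections, and the standard two-way descent of semistability---is exactly this route and is correct.

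One cosmetic slip: in your descent paragraph the labels ``direct implication'' and ``converse'' are interchanged. The projection-formula argument (pulling back a destabilizer still destabilizes) proves $p^{[*]}\mathcal{G}$ semistable $\Rightarrow \mathcal{G}$ semistable, whereas the uniqueness of the maximal destabilizing subsheaf and its Galois-invariance prove the opposite direction $\mathcal{G}$ semistable $\Rightarrow p^{[*]}\mathcal{G}$ semistable. The mathematics is unaffected.
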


One can replace "semistable" by "polystable" in the previous statement, and the proof goes similarly, using \cite[Lemma.~3.2.3]{HL10}. Hence, it is sufficient to show that $\mathcal{T}_{\widehat{X}, \widehat{\Delta}, \widehat{f}}$ and $\widehat{\mathcal{E}}_{\widehat{X}, \widehat{\Delta}, \widehat{f}}$ are semistable (or polystable) with respect to $\widehat{f}^*\pi^*c_1(X, \Delta)$.

\subsection{Polystability of the adapted tangent sheaf}

First, we observe that the $(1,1)$-form $\widehat{f}^*\omega_{t, \varepsilon}$ induces a hermitian metric $\widehat{f}^*h_{t, \varepsilon}$ on $\mathcal{T}_{\widehat{X}, \widehat{\Delta}, \widehat{f}}$, 
and a metric $\widehat{f}^*h_{\mathcal{V}_t(E), \varepsilon}$ on $\mathcal{V}_t(E)_{\widehat{X}, \widehat{\Delta}, \widehat{f}}$. In order to keep notation light, we denote in the following:
\begin{itemize}
    \item $\Omega_{t,\varepsilon} := \widehat{f}^*\omega_{t, \varepsilon}$,
    \item $H_{t, \varepsilon} := \widehat{f}^*h_{t, \varepsilon}$ the metric on $\mathcal{T}_{\widehat{X}, \widehat{\Delta}, \widehat{f}}$,
    \item $H^{\mathcal{V}}_{t, \varepsilon} := \widehat{f}^*h_{\mathcal{V}_t(E), \varepsilon}$ the metric on $\mathcal{V}_t(E)_{\widehat{X}, \widehat{\Delta}, \widehat{f}}$.
\end{itemize}
The Ricci curvature of $\Omega_{t, \varepsilon}$ can be written, thanks to \eqref{Perturbed Ric}:
\begin{equation}\label{Ricci perturbed ramified}
    \Ric \Omega_{t, \varepsilon} = \Omega_{t, \varepsilon} + \ddc \widehat{f}^*(\psi_\varepsilon - \varphi_{t, \varepsilon}) - t\widehat{f}^*\widehat{\omega} - \sum_i a_i \widehat{f}^*\theta_{i, \varepsilon} - \sum_j (N_j - 1)\left[ D_j \right],
\end{equation}
where $D_j$ are the irreducible components of the additional ramification, and $N_j$ is the ramification index of $\widehat{f}$ along $D_j$. In the following, we will denote $D := \sum_j D_j$ the reduced divisor of the additional ramification.

\subsubsection{Semistability of $\mathcal{T}_{\widehat{X}, \widehat{\Delta}, \widehat{f}}$}\label{Semistability of the adapted tangent sheaf}

Let us consider $\mathcal{F}$ a proper saturated subsheaf of $\mathcal{T}_{\widehat{X}, \widehat{\Delta}, \widehat{f}}$. Hence, there exists a big open subset $W \subseteq \widehat{Y}$ and a subbundle $F \subseteq \mathcal{T}_{\widehat{X}, \widehat{\Delta}, \widehat{f}}$ such that the restriction of $\mathcal{F}$ to $W$ is the sheaf of sections of $F$. We have by same arguments as those given to prove Lemma~\ref{Kobayashi orbifolde slope}:
$$
\mu_{\widehat{f}^*\omega_{t, \varepsilon}}(\mathcal{F}) = \frac{1}{r}\int_{W} c_1(F, H_{t, \varepsilon}) \wedge \Omega_{t, \varepsilon}^{n-1}.
$$

We have the following inequality pointwise:
$$
    c_1(F, H_{t, \varepsilon}) = \tr_\End(\textbf{i}\Theta(F, H_{t, \varepsilon})) \leq \tr_\End(\mathrm{pr}_F \textbf{i}\Theta(\mathcal{T}_{\widehat{X}, \widehat{\Delta}, \widehat{f}}, H_{t, \varepsilon})_{|F}).
$$
We consider a family of cutoff functions $(\chi_\eta)$ defined on $\widehat{X}$ with compact support included in $\widehat{X} \backslash \widehat{f}(|D|)$, that converges pointwise to 1 when $\eta$ goes to zero. Hence, we have:
\begin{align*}
\mu_{\widehat{f}^*\omega_{t, \varepsilon}}(\mathcal{F}) \leq \frac{1}{r}\int_{W} & \tr_\End(\mathrm{pr}_F \textbf{i}\Theta (\mathcal{T}_{\widehat{X}, \widehat{\Delta}, \widehat{f}}, H_{t, \varepsilon})_{|F}) \wedge \Omega_{t, \varepsilon}^{n-1} \\
& = \frac{1}{r} \lim_{\eta \to 0} \int_{W} \widehat{f}^*\chi_\eta \tr_\End(\mathrm{pr}_F \textbf{i}\Theta(\mathcal{T}_{\widehat{X}, \widehat{\Delta}, \widehat{f}}, H_{t, \varepsilon})_{|F}) \wedge \Omega_{t, \varepsilon}^{n-1} \\
& = \frac{1}{nr} \lim_{\eta \to 0} \int_{W} \widehat{f}^*\chi_\eta \tr_\End(\mathrm{pr}_F \sharp \Ric(\Omega_{t, \varepsilon})_{|F}) \Omega_{t, \varepsilon}^{n}
\end{align*}
and we have by using \eqref{Ricci perturbed ramified}:
\begin{align*}
\frac{1}{nr}\int_{W} \widehat{f}^*\chi_\eta \tr_\End(\mathrm{pr}_F & \sharp \Ric(\Omega_{t, \varepsilon})_{|F}) \Omega_{t, \varepsilon}^{n} \\
& = \frac{1}{nr} \int_{W} \widehat{f}^*\chi_\eta \tr_\End(\mathrm{pr}_F {\sharp \Omega_{t, \varepsilon}}_{|F}) \Omega_{t, \varepsilon}^{n} \\
& \qquad + \frac{1}{nr} \int_{W} \widehat{f}^*\chi_\eta \tr_\End(\mathrm{pr}_F \sharp \ddc \widehat{f}^*(\psi_\varepsilon - \varphi_{t, \varepsilon})_{|F}) \Omega_{t, \varepsilon}^{n} \\
& \qquad - \frac{t}{nr} \int_{W} \widehat{f}^*\chi_\eta \tr_\End(\mathrm{pr}_F \sharp \widehat{f}^*\widehat{\omega}_{|F} ) \Omega_{t, \varepsilon}^{n} \\
& \qquad - \frac{1}{nr} \int_{W} \widehat{f}^*\chi_\eta \sum_i a_i \tr_\End(\mathrm{pr}_F {\sharp \widehat{f}^*\theta_{i, \varepsilon}}_{|F}) \Omega_{t, \varepsilon}^{n}. \\
\end{align*}

The first integral converges to:
$$
\frac{1}{nr}\int_{W} \widehat{f}^*\chi_\eta \tr_\End(\mathrm{pr}_F {\sharp \Omega_{t, \varepsilon}}_{|F}) \Omega_{t, \varepsilon}^{n}  \underset{\eta \to 0}{\longrightarrow} \frac{1}{n} \widehat{f}^* \left[ \omega_{t, \varepsilon} \right]^n.
$$
The limit is a cohomological quantity, and, when $t, \varepsilon$ go to zero, it converges to: 
$$
\frac{1}{n}\widehat{f}^* \left[ \omega_{t, \varepsilon} \right]^n \underset{t, \varepsilon \to 0}{\longrightarrow} \frac{1}{n}\widehat{f}^* \left[ \pi^*\omega \right]^n = \mu_{\widehat{f}^*\pi^*\omega}(\mathcal{T}_{\widehat{X}, \widehat{\Delta}, \widehat{f}}).
$$

Let us analyze the second integral:
$$
\int_{\widehat{Y}} \widehat{f}^*\chi_\eta \tr_\End(\mathrm{pr}_F \sharp \ddc \widehat{f}^*(\psi_\varepsilon - \varphi_{t, \varepsilon})_{|F}) \Omega_{t, \varepsilon}^n.
$$
For any $t, \varepsilon$, there is a domination of the integrand by a $(n,n)$-form $\alpha_{t, \varepsilon}$ with measurable positive coefficients with respect to $\Omega_{t, \varepsilon}$ by using same arguments as those developed for bounding the term \textbf{(III)} in Section~\ref{Analysis of different terms}. By Lebesgue's dominated convergence theorem, this integral converges when $\eta$ goes to zero, to the integral:
$$
\int_{\widehat{Y} \backslash |D|} \tr_\End(\mathrm{pr}_F \sharp \ddc \widehat{f}^*(\psi_\varepsilon - \varphi_{t, \varepsilon})_{|F}) \Omega_{t, \varepsilon}^n,
$$
and the domination indicates that this converges to zero when $\varepsilon, t$ go to zero. Hence, we sill have the convergence of this term to zero. The same argument holds to show the convergence:
$$
 \lim_{t \to 0} \lim_{\varepsilon \to 0} \lim_{\eta \to 0} \int_{W} \widehat{f}^*\chi_\eta \sum_i a_i \tr_\End(\mathrm{pr}_F {\sharp \widehat{f}^*\theta_{i, \varepsilon}}_{|F}) \Omega_{t, \varepsilon}^{n} = 0.
$$

To conclude, the positivity of $\widehat{\omega}$ shows that:
$$
\int_{W} \widehat{f}^*\chi_\eta \tr_\End(\mathrm{pr}_F \sharp \widehat{f}^*\widehat{\omega}_{|F} ) \Omega_{t, \varepsilon}^{n} \geq 0.
$$

Finally, we have by taking limit when $\varepsilon, t$ go to zero:
$$
\mu_{\widehat{f}^*\pi^*\omega}(\mathcal{F}) \leq \mu_{\widehat{f}^*\pi^*\omega}(\mathcal{T}_{\widehat{X}, \widehat{\Delta}, \widehat{f}}),
$$
and we conclude that $\mathcal{T}_{\widehat{X}, \widehat{\Delta}, \widehat{f}}$ is semistable.

\subsubsection{Polystability of $\mathcal{T}_{X, \Delta, f}$}\label{Polystability of the adapted tangent sheaf}

To show polystability, we adapt the arguments from the proof of \cite[Thm.~6]{DGP24}. Let $\mathcal{F}$ be a saturated coherent subsheaf of $\mathcal{T}_{\widehat{X}, \widehat{\Delta}, \widehat{f}}$ such that
$$
\mu_{\widehat{f}^*\pi^* \omega}(\mathcal{F}) = \mu_{\widehat{f}^*\pi^* \omega}(\mathcal{T}_{\widehat{X}, \widehat{\Delta}, \widehat{f}}).
$$
Thanks to Remark~\ref{convergence local smooth orbifold of potentials}, we observe that $\Omega_{t, \varepsilon}$ converge $\mathcal{C}^\infty$ locally in $\widehat{Y} \backslash E'$ to $\widehat{f}^*\pi^*\omega$ so that the second fundamental forms $\beta_{t, \varepsilon}$ given by the inclusion of vector bundles $F \hookrightarrow \mathcal{T}_{\widehat{X}, \widehat{\Delta}, \widehat{f}}$ on $W^\circ := W \cap \widehat{Y} \backslash E'$ induced by $H_{t, \varepsilon}$ converge to $\beta_\infty$, the second fundamental form induced by $\widehat{f}^*\pi^* \omega$ away from $E'$. By Fatou lemma, we have:
$$
0 \leq -\int_{W^\circ} \tr_\End(\beta_\infty \wedge \beta_\infty^*) \wedge \widehat{f}^*\pi^*\omega \leq \liminf_{t, \varepsilon \to 0} -\int_{W^\circ} \tr_\End(\beta_{t, \varepsilon} \wedge \beta_{t, \varepsilon}^*) \wedge \Omega_{t, \varepsilon}^{n-1} = 0.
$$

Hence, we have a holomorphic splitting: $\mathcal{T}_{\widehat{X}, \widehat{\Delta}, \widehat{f} |W^\circ} = \mathcal{F}_{| W^\circ} \oplus (\mathcal{F}_{|W^\circ})^\perp$.
By iterating this process and starting with $\mathcal{F}$ which has same slope as $\mathcal{T}_{\widehat{X}, \widehat{\Delta}, \widehat{f}}$ and has minimal rank, we have a decomposition:
$$
\mathcal{T}_{\widehat{X}, \widehat{\Delta}, \widehat{f} |W^\circ} = \bigoplus_i \mathcal{F}_{i |W^\circ}.
$$

Now, we denote by $j : V := \nu(W^\circ) \hookrightarrow Y$ the inclusion morphism of $V$ which is a Zariski open subset whose complement has codimension at least 2. We have:
$$
j_* \nu^{-1}_* \mathcal{T}_{\widehat{X}, \widehat{\Delta}, \widehat{f}} = \bigoplus_i j_* \nu^{-1}_*\mathcal{F}_{i |W^\circ}.
$$
We have $j_* \nu^{-1}_* \mathcal{T}_{\widehat{X}, \widehat{\Delta}, \widehat{f}} = \mathcal{T}_{X, \Delta, f}$ since they are both reflexive and they coincide on $V$. Moreover, the sheaves $\mathcal{S}_i := j_* \nu^{-1}_*\mathcal{F}_{i |W^\circ}$ are reflexive, and they are stable\footnote{The computations from Lemma~\ref{Reduction of semistability to the resolution} show that we can replace semistable by stable.} with respect to $f^*c_1(X, \Delta)$. By projection formula, they all have same slope equal to $\mu_{f^*\omega}(\mathcal{T}_{X, \Delta, f})$.\\

\subsection{Polystability of the adapted canonical extension}

\subsubsection{Semistability of $\mathcal{E}_{\widehat{X}, \widehat{\Delta}, \widehat{f}}$}
Let us consider $\mathcal{F}$ a proper saturated subsheaf of $\widehat{\mathcal{E}}_{\widehat{X}, \widehat{\Delta}, \widehat{f}}$. Then, there is an inclusion of sheaves $\mathcal{F} \hookrightarrow \mathcal{V}_t(E)_{ \widehat{X}, \widehat{\Delta}, \widehat{f}}$ for $t$ in a neighborhood of $0$ by same arguments as Section~\ref{Inclusion of suborbisheaf}. Now, since $\mathcal{F}$ is not necessarily saturated in $\mathcal{V}_t(E)_{ \widehat{X}, \widehat{\Delta}, \widehat{f}}$, we consider its saturation $\mathcal{F}_t$. Hence, there exists an effective divisor $D_t$ such that $\det \mathcal{F} = \det \mathcal{F}_t \otimes \mathcal{O}(-D_t)$ and an open subset $W_t \subseteq \widehat{Y}$ and a subbundle $F_t \subseteq \mathcal{V}_t(E)_{\widehat{X}, \widehat{\Delta}, \widehat{f}}$ such that $\mathcal{F}_t$ is the sheaf of sections of $F_t$ on $W_t$.
First, we have:
$$
    \mu_{\widehat{f}^*\omega_{t, \varepsilon}}(\mathcal{F}) = \mu_{\widehat{f}^*\omega_{t, \varepsilon}}(\mathcal{F}_t) - \frac{1}{r} \left[ D_t \right] \cdot \left[ \widehat{f}^*\omega_{t, \varepsilon}  \right]^{n-1} \leq \mu_{\widehat{f}^*\omega_{t, \varepsilon}}(\mathcal{F}_t).
$$

We have by same arguments as in the previous section:
$$
    \mu_{\widehat{f}^*\omega_{t, \varepsilon}}(\mathcal{F}_t) = \frac{1}{r} \int_{W_t} c_1(F_t, H^{\mathcal{V}}_{t, \varepsilon}) \wedge \Omega_{t, \varepsilon}^{n-1}.
$$

We have the following inequality pointwise:
\begin{align*}
    c_1(F_t, H^{\mathcal{V}}_{t, \varepsilon}) & \leq \tr_\End(\mathrm{pr}_{F_t} \textbf{i}\Theta(\mathcal{V}_t(E)_{\widehat{X}, \widehat{\Delta}, \widehat{f}}, H^{\mathcal{V}}_{t, \varepsilon})_{|F_t}).
\end{align*}
Away from $D$, the previous curvature tensor of the deformation of the extension can be written in the following way:
\begin{align*}
    \textbf{i}\Theta(\mathcal{V}_t(E)_{\widehat{X}, \widehat{\Delta}, \widehat{f}}, H^{\mathcal{V}}_{t, \varepsilon}) \wedge \Omega_{t, \varepsilon}^{n-1} & = \frac{1}{n+1} \Omega_{t, \varepsilon}^n I_{n+1} + A'_{t, \varepsilon} \\
    & \qquad + \widehat{f}^*\textbf{i}\Theta(E, h_E) \wedge \Omega_{t, \varepsilon}^{n-1} I_{n+1},
\end{align*}
where 
$$
    A'_{t, \varepsilon} :=
\begin{bmatrix}
    0 & 0 \\
    0 & \sharp \ddc \widehat{f}^*(\psi_\varepsilon - \varphi_{t, \varepsilon}) - t\sharp \widehat{f}^*\widehat{\omega} - \sum_i a_i \sharp \widehat{f}^*\theta_{i, \varepsilon}
\end{bmatrix}
\frac{1}{n}\Omega_{t, \varepsilon}^n.
$$
The same arguments as those developed at the end of Section~\ref{Semistability of the extension} conjugated with the bounds given in Section~\ref{Semistability of the adapted tangent sheaf} show that:
$$
\int_{W_t} \tr_\End(\mathrm{pr}_{F_t} {A'_{t, \varepsilon}}_{|F_t}) \underset{t, \varepsilon \to 0}{\longrightarrow} 0,
$$
and similarly
$$
\int_{\widehat{Y}} \widehat{f}^*\textbf{i}\Theta(E, h_E) \wedge \Omega_{t, \varepsilon}^{n-1} \underset{t, \varepsilon \to 0}{\longrightarrow} 0.
$$

Hence, we have:
$$
\frac{1}{r}\int_{W_t} \tr_\End(\mathrm{pr}_{F_t}\textbf{i}\Theta(\mathcal{V}_t(E)_{\widehat{X}, \widehat{\Delta}, \widehat{f}}, H^{\mathcal{V}}_{t, \varepsilon})_{|F_t}) \Omega_{t, \varepsilon}^n \underset{t, \varepsilon \to 0}{\longrightarrow}\frac{1}{n+1} \widehat{f}^* \left[ \pi^*\omega \right]^{n} = \mu_{\widehat{f}^*\pi^*\omega}(\widehat{\mathcal{E}}_{\widehat{X}, \widehat{\Delta}, \widehat{f}}).
$$
In other words, by taking the limit in $t, \varepsilon$, we get that the adapted extension $\widehat{\mathcal{E}}_{\widehat{X}, \widehat{\Delta}, \widehat{f}}$ is semistable:
$$
\mu_{\widehat{f}^*\pi^*\omega}(\mathcal{F}) \leq \mu_{\widehat{f}^*\pi^*\omega}(\widehat{\mathcal{E}}_{\widehat{X}, \widehat{\Delta}, \widehat{f}}).
$$

\subsubsection{Polystability of $\mathcal{E}_{X, \Delta, f}$}

To show the polystability of the canonical extension, we use the same arguments as those developed for the polystability of $\mathcal{T}_{X, \Delta, f}$. We consider $\mathcal{F} \subseteq \widehat{\mathcal{E}}_{\widehat{X}, \widehat{\Delta}, \widehat{f}}$ a proper coherent saturated subsheaf that has same slope as the extension. Then, the convergence properties of the second fundamental forms stated in \S\ref{Polystability of the adapted tangent sheaf} show that above a Zariski open subset $W_0^\circ \subseteq \widehat{Y} \backslash E'$, we have a decomposition: $\widehat{\mathcal{E}}_{\widehat{X}, \widehat{\Delta}, \widehat{f} |W_0^\circ} = \mathcal{F}_{|W_0^\circ} \oplus (\mathcal{F}_{|W_0^\circ})^\perp$. By induction, this produces a decomposition:
$$
\widehat{\mathcal{E}}_{\widehat{X}, \widehat{\Delta}, \widehat{f} |W_0^\circ} = \bigoplus_i \mathcal{F}_{i |W_0^\circ},
$$ 

and by pushing forward this decomposition to $V_0 := \nu(W_0^\circ)$, and taking the direct image through the inclusion $j_0 : V_0 \hookrightarrow Y$, we get the sought decomposition.

\printbibliography

\end{document}